\documentclass[11pt,a4paper,twoside]{article}

\usepackage{authblk}  

\usepackage{ushort} 

\usepackage{amsmath}
\usepackage{amssymb}
\usepackage{amsthm}
\usepackage{mathrsfs}
\usepackage{dsfont}
\usepackage{mathtools}
\usepackage{euscript}
\usepackage{xcolor}
\usepackage{comment}
\usepackage{soul}

\usepackage{hyperref}

\usepackage{color}
\usepackage{graphicx}
\usepackage{bbm}
\usepackage{enumerate}
\usepackage{ifpdf}

\usepackage{relsize} 

\numberwithin{equation}{section}  

\newtheorem{theorem}{Theorem}[section]
\newtheorem{definition}{Definition}[section]
\newtheorem{remark}{Remark}[section]
\newtheorem{lemma}[theorem]{Lemma}

\newtheorem{corollary}[theorem]{Corollary}

\newcommand{\e}{\operatorname{e}} 

\newcommand{\Lip}{\mathrm{Lip}}

\newcommand{\var}{\mathrm{var}}

\newcommand{\dd}{{\mathrm d}}

\newcommand{\N}{\mathds{N}}

\newcommand{\R}{\mathds{R}}

\newcommand{\E}{\mathds{E}}

\newcommand{\proba}{\mathds{P}}

\newcommand{\un}{\mathds{1}}

\def\Oun{\mathcal{O}(1)}

\def\A{\mathcal{A}}

\newcommand{\lawto}{\mathop{}\!\xrightarrow[]{\mathrm{\scriptscriptstyle{law}}}}


\font\gfont=cmmi10 scaled \magstep{1.5}    
{2}

\newcommand{\gdelta}{\hbox{\gfont \char14}}

\newcommand{\cte}[1]{C_{\mathrm{(\ref{#1})}}}
\newcommand{\pcte}[1]{c_{\mathrm{(\ref{#1})}}}

\textwidth=13cm

\title{Gaussian Concentration bound for potentials satisfying Walters condition with subexponential continuity rates}

\author[1]{J.-R. Chazottes
\thanks{JRC benefited from an ECOS Nord project for his stay at San Luis Potos\'{\i}.}}

\author[2]{J. Moles
\thanks{JM benefited from an ECOS Nord project for his stay at Ecole Polytechnique.}}

\author[2]{E. Ugalde
\thanks{EU acknowledges Ecole Polytechnique for financial support (one-month stay).}}

\affil[1]{Centre de Physique Th\'eorique, Ecole Polytechnique, CNRS, 91128 Palaiseau, France.\newline
Email: \texttt{chazottes@cpht.polytechnique.fr}}

\affil[2]{Instituto de F\'{\i}sica, Universidad Aut\'onoma de San Luis Potos\'{\i}, S.L.P., 78290 M\'exico  \newline Emails: \texttt{molesjdn@ifisica.uaslp.mx}, \texttt{ugalde@ifisica.uaslp.mx }}

\date{Dated: \today}

\begin{document}

\maketitle

\begin{abstract}
We consider the full shift $T:\Omega\to\Omega$ where $\Omega=A^\N$, $A$ being a finite alphabet. For a class of
potentials which contains in particular potentials $\phi$ with variation decreasing like $O(n^{-\alpha})$ for some $\alpha>2$, we prove that their corresponding equilibrium state $\mu_\phi$ satisfies a Gaussian concentration bound. Namely, we prove that there exists a constant $C>0$ such that, for all $n$ and for all separately Lipschitz functions $K(x_0,\ldots,x_{n-1})$, the exponential moment of $K(x,\ldots,T^{n-1}x)-\int K(y,\ldots,T^{n-1}y)\, \dd\mu_\phi(y)$ is bounded by
$\exp\big(C\sum_{i=0}^{n-1}\Lip_i(K)^2\big)$. The crucial point is that $C$ is independent of $n$ and $K$. We then derive various consequences of this inequality. For instance, we obtain bounds on the fluctuations of the empirical frequency of blocks,
the speed of convergence of the empirical measure, and speed of Markov approximation of $\mu_\phi$. We also derive an almost-sure central limit theorem.
\newline
{\em Keywords}: concentration inequalities, empirical measure, Kantorovich distance, Wasserstein distance, d-bar distance,
relative entropy, Markov approximation, almost-sure central limit theorem.
\end{abstract}

\newpage

\tableofcontents

\section{Introduction}

We consider the full shift $T:\Omega\to\Omega$ where $\Omega=A^\N$, $A$ being a finite alphabet. 
Given an ergodic measure $\mu$ on $\Omega$ and a continuous observable $f:\Omega\to\R$, we know by Birkhoff's ergodic
theorem that  $n^{-1}S_nf(x)$ converges, for $\mu$-almost every $x$, to $\int f\dd\mu$. (We use the standard notation $S_n f=f+f\circ T+\cdots +f\circ T^{n-1}$.) To refine this result, we need more assumptions on
$\mu$ and $f$. For instance, if $\mu=\mu_\phi$ is the equilibrium state for a Lipschitz potential $\phi:\Omega\to\R$ and $f$ is also a Lipschitz function, then the following central limit theorem holds:
\begin{equation}\label{intro-clt}
\lim_{n\to\infty}\mu_\phi\left( x: \frac{S_nf(x)-n\int f\dd\mu_\phi}{\sqrt{n}}\leq u\right)=
\frac{1}{\sigma\sqrt{2\pi}}\int_{-\infty}^u \e^{-\frac{\xi^2}{2\sigma^2}}\dd \xi
\end{equation}
for all $u\in\R$, where $\sigma^2=\sigma_f^2$ is the variance of the process $\{f(T^n x)\}_{n\geq 0}$ where $x$ is distributed according to $\mu_\phi$.
\footnote{ $\sigma_f^2=\int f^2\dd\mu_\phi-\Big(\int f\dd\mu_\phi\Big)^2+
2\sum_{\ell\geq 1} \left(\int f\cdot f\circ T^\ell \dd\mu_\phi-\Big(\int f\dd\mu_\phi\Big)^2 \right)$} 
This result says in essence that the fluctuations of $S_nf(x)-n\int f\dd\mu_\phi$ are with high probability of
order $\sqrt{n}$, when $n\to\infty$. Fluctuations of order $n$, referred to as `large deviations', are unlikely to appear.
Indeed, for instance one has
\begin{equation}\label{intro-ld}
\mu_\phi\left(x : \frac{1}{n}S_nf(x) \geq \int f\dd\mu_\phi +u\right)\asymp \e^{-n I(u+\int f\dd\mu_\phi)}
\end{equation}
where $u\geq 0$ and $I(u)\geq 0$ is the so-called `rate function' which is (strictly) convex, such that $I(\int f\dd\mu_\phi)=0$, and equal to $+\infty$ outside
a certain finite interval $(\ushort{u}_f,\bar{u}_f)$.\footnote{For two positive sequences $(a_n), (b_n)$, $a_n\asymp b_n$ means that
$\lim_n (1/n)\log a_n =\lim_n (1/n)\log b_n$.} Of course, both the central limit theorem and the large deviation asymptotics have been obtained for more general potentials, and for more general `chaotic' dynamical systems. For a fairly recent review on probabilistic properties of nonuniformly hyperbolic dynamical systems modeled by Young towers, we refer to \cite{chazottes_survey}. 

In this paper, we are interested in concentration inequalities which describe the fluctuations
of observables of the form $K(x,Tx,\ldots,T^{n-1}x)$ around their average. The only restriction on $K$ is that it has to be separately Lipschitz. By this we mean that, for all $i=0,\ldots,n-1$, there exists a constant $\Lip_i(K)$ with 
\[
|K(x_{0},\dots,x_{i},\dots,x_{n-1})-K(x_{0},\dots,x'_{i},\dots,x_{n-1})| \leq \Lip_i(K)\, d(x_{i},x'_{i}).
\]
for all points $x_0,\dots,x_i,\dots,x_{n-1},x'_{i}$ in $\Omega$, where $d$ is the usual distance on $\Omega$ (see \eqref{def-distance}).
So $K$ can be nonlinear and implicitly defined. Of course, such a class contains partial sums of Lipschitz functions, namely functions of the form $K(x_0,\ldots,x_{n-1})=f(x_0)+\cdots + f(x_{n-1})$ for which $\Lip_i(K)=\Lip(f)$ for all $i$.  
Beside considering very general observables, the other essential characteristics of concentration inequalities is that they are valid
for all $n$, contrarily to the above two results which are valid only in the limit $n\to\infty$. More precisely, we shall prove the following `Gaussian concentration bound'. There exists a constant $C$ such that, for all $n$ and for all separately Lipschitz functions
$K(x_0,\ldots,x_{n-1})$, we have
\begin{align}
\nonumber
& \int \exp\left( K\left(x,Tx,\dots,T^{n-1}x\right)\right) \dd\mu_\phi(x) \\
& \leq \exp\left( \int K\left(x,Tx,\dots,T^{n-1}x\right) \dd\mu_\phi(x)\right)
\exp\left(C \sum_{j=0}^{n-1} \Lip_j(K)^2\right)\,.
\label{intro-gcb}
\end{align}
The crucial point is that $C$ is independent of $n$ and $K$.
By a standard argument (see below), the previous inequality implies that for all $u>0$
\begin{align}
\nonumber
& \mu_\phi\left(x: K\left(x,Tx,\dots,T^{n-1}x\right)
-\int K\left(y,Ty,\dots,T^{n-1}y\right) \dd\mu_\phi(y)\geq u\right)\\
& \leq \exp\left(-\frac{u^{2}}{4C\sum_{i=0}^{n-1}\Lip_i(K)^{2}}\right)\,.
\label{intro-ci}
\end{align}
The Gaussian concentration bound \eqref{intro-gcb} is known for Lipschitz potentials \cite{2012ChazottesGouezel}.
We shall prove that it remains true for a large subclass of potentials $\phi$ satisfying Walters condition.
For instance, the bound holds for a potential whose variation is $O(n^{-\alpha})$ for some $\alpha>2$. 
The proof of our result relies on two main ingredients. First, we start with a classical decomposition of $K-\int K$ as a telescopic sum of
martingale differences. Second, we have to do a second telescoping to use Ruelle's Perrons-Frobenius operator. But we do not have a spectral
gap anymore as in \cite{2012ChazottesGouezel} (in the case of Lipschitz potentials). Instead, we use a result of V. Maume-Deschamps \cite{maume-phd}
based on Birkhoff cones.

We apply the Gaussian concentration bound  and its consequences, like \eqref{intro-ci}, to various observables. 
On the one hand, we obtain concentration bounds for previously studied observables. We get the same bounds but they are no
more limited to equilibrium states with Lipschitz potentials. On the other hand, we consider observables not considered before.
Even when $K(x,\ldots,T^{n-1}x)=S_nf(x)$, we get a non-trivial bound. We then obtain a control on the fluctuations of the empirical frequency of blocks $a^0,\ldots,a^{k-1}$ around $\mu([a^0,\ldots,a^{k-1}])$, uniformly in $a^0,\ldots,a^{k-1}\in A^k$.
We then consider an estimator of the entropy $\mu_\phi$ based on hitting times.
The next application is about the speed of convergence of the empirical measure $(1/n)\sum_{i=0}^{n-1} \delta_{T^i x}$ towards $\mu_\phi$ in Wasserstein distance. Then we obtain an upper bound for the $\bar{d}$-distance between any shift-invariant probability measure and $\mu_\phi$. This distance is bounded by the square root of their relative entropy, times a constant. A consequence of this inequality is a bound for the speed of convergence of the Markov approximation of $\mu_\phi$ in $\bar{d}$-distance. Then we quantify the `shadowing' of an orbit by another one which has to start in a subset of
$\Omega$ with $\mu_\phi$-measure $1/3$, say. Finally, we prove an almost-sure version of the central limit theorem. This application shows in particular that concentration inequalities can also be used to obtain limit theorems.

\section{Setting and preliminary results}

Let $\Omega=A^\N$ where $A$ is a finite set. We denote by $x=x^0x^1\dots$ the elements of
$\Omega$ (hence $x^i\in A$), and by $T$ the shift map: $(Tx)^k=x^{k+1}$, $k\in\N$. (We use upper indices instead of lower indices because we will need to consider bunches of
points in $\Omega$, {\em e.g.}, $x_0,x_1,\dots,x_p$, $x_i\in\Omega$.) We use the classical distance
\begin{equation}\label{def-distance}
d_\theta(x,y)=\theta^{\inf\{k: x^k\neq y^k\}}
\end{equation}
where $\theta\in(0,1)$ is some fixed number. Probability measures are defined on the Borel sigma-algebra of $\Omega$ which is generated by cylinder
sets. Let $\phi:\Omega\to\R$ be a continuous potential, which means that 
\[
\var_n (\phi):=\sup\lbrace|\phi(x)-\phi(y)| : x^i=y^i, 0\leq i\leq n-1 \rbrace\xrightarrow[]{n\to\infty} 0.
\]
The sequence $(\var_n (\phi))_{n\geq 1}$ is the modulus of continuity of $\phi$ and it is called the `variation' of $\phi$ in our context.
By the way, we denote by $\mathscr{C}(\Omega)$ the Banach space of real-valued continuous functions on $\Omega$ equipped with the supremum norm
$\|\cdot\|_\infty$.
We put further restrictions on $\phi$, namely that it must satisfy the Walters condition \cite{1978Walters}. For $x,y$ in $\Omega$ let
\[
W(\phi,x,y)=\sup_{n\in\N}\sup_{a \in A^n}\left| S_n\phi(ax)-S_n\phi(ay)\right|\,.
\]
We assume that $W(\phi,x,y)$ exists and that there exists $W(\phi)>0$ such that
\begin{equation}\label{Wphi}
\sup_{x,y\, \in\,\Omega}W(\phi,x,y)\leq W(\phi)\,.
\end{equation}
Now for $p\in\N$ let
\[
W_p(\phi):=\sup\{W(\phi,x,y): x^i=y^i, 0\leq i\leq p-1\}\,.
\]
\begin{definition}\label{def-W-cond}
$\phi$ is said to satisfy Walters' condition if $(W_p(\phi))_{p\in\N}$ is a strictly positive sequence
and decreases to $0$ as $p\to\infty$.
\end{definition}

We now make several remarks on Walters' condition.
First, observe that locally constant potentials do not satisfy this condition because $W_p(\phi)=0$ for all $p$ larger than some $p_0$. But one can in fact work with any 
strictly positive sequence $(\widetilde{W}_p(\phi))_{p\in\N}$ decreasing to zero such that $W_p(\phi)\leq \widetilde{W}_p(\phi)$ for all $p$, {\em e.g.}, $\max(W_p(\phi),\eta^p)$ for some fixed $\eta\in(0,1)$.
Second, one easily checks that 
\begin{equation}\label{cphivar}
\var_{p+1}(\phi) \leq W_p(\phi)\leq \sum_{k=p+1}^\infty \var_k(\phi)\,, \, p\in\N.
\end{equation}
Hence the set of potentials satisfying Walters' condition contains the set of potentials with summable variation. In particular, 
$(W_p(\phi))_p$ is bounded above by a geometric sequence if and only if $(\var_p(\phi))_p$ is also bounded above by a geometric sequence.
This corresponds to the case of Lipschitz or H\"older potentials (with respect to $d_\theta$).

Now define Ruelle's Perron-Frobenius operator $P_\phi:\mathscr{C}(\Omega)\to\mathscr{C}(\Omega)$ as 
\[
P_\phi f(x)=\sum_{Ty=x} f(y)\e^{\phi(y)}\,.
\]
The next step is to define a function space preserved by $P_\phi$ and on which it has good spectral properties. We take the space of Lipschitz functions with respect to a new distance $d_\phi$ built out of $\phi$ as follows. 
\begin{definition}[The distance $d_\phi$]
For $x,y\in\Omega$ let
\[
d_\phi(x,y)=W_p(\phi) \quad\textup{if} \quad d_\theta(x,y)=\theta^{p}
\]
and $d_\phi(x,x)=0$. 
\end{definition}
Now define
\[
\mathcal{L}_\phi=\lbrace f\in\mathscr{C}(\Omega) : \exists M>0 \;\textup{such that}\;\var_{n}(f)\leq MW_n(\phi), n=1,2,\ldots \rbrace
\]
and
\begin{equation*}
\Lip_{\phi} (f)
=
\sup\left\{\frac{|f(x)-f(y)|}{d_\phi(x,y)}: x\neq y\right\}
=
\sup\left\{\frac{\var_{n}(f)}{W_n(\phi)}: n\in\N\right\}.
\end{equation*}
One can then define a norm on $\mathcal{L}_\phi$, making it a Banach space, by setting
\begin{equation*}
\|f\|_{\mathcal{L}_\phi}=\|f\|_{\infty}+\Lip_{\phi} (f).
\end{equation*}

\begin{remark}
The usual Banach space of Lipschitz functions is defined as follows. Let
\[
\mathcal{L}_\theta=\lbrace f\in\mathscr{C}(\Omega) : \exists M>0 \;\textup{such that}\;\var_{n}(f)\leq M\theta^n, n=1,2,\ldots \rbrace
\]
and
\begin{equation*}
\Lip_\theta(f)
=\sup\left\{\frac{|f(x)-f(y)|}{d_\theta(x,y)}: x\neq y\right\}
=\sup\left\{\frac{\var_{n}(f)}{\theta^n}: n\in\N\right\}.
\end{equation*}
The canonical norm making $\mathcal{L}_\theta$ a Banach space is $\|f\|_{\mathcal{L}_\theta}=\|f\|_\infty + \Lip_\theta(f)$.

In view of \eqref{cphivar}, if we have $W_n(\phi)=O(\theta^n)$, then $\mathcal{L}_\theta=\mathcal{L}_\phi$.
If we now have, for instance, $W_n(\phi)=O(n^{-q})$ for some $q>0$, then we get a bigger space which contains in particular all functions $f$ such that $\var_n(f)=O(n^{-r})$ with $r\geq q$.
\end{remark}

The following result is instrumental to this article. In brief, it tells us that a potential $\phi$ satisfying Walters' condition has a unique equilibrium state, which will be denoted by $\mu_\phi$, and gives a speed of convergence for the properly normalized iterates of the associated Ruelle's Perron-Frobenius operator.
The first part of the theorem is due to Walters, while the second one is due to Maume-Deschamps and can be found in her PhD thesis \cite[Chapter I.2]{maume-phd}. Unfortunately, her result was not published even though it is much sharper than the result in \cite{1997MaumeKondahSchmitt}.

\begin{theorem}[\cite{1978Walters}, \cite{maume-phd}]\label{backgroundthm}
Let $\phi:\Omega\to\R$ satisfying Walters' condition as above. Then the following holds.
\begin{itemize}
\item[A.]
There exists a unique triplet $(h_\phi, \lambda_\phi,\nu_\phi)$ such that
$h_\phi\in \mathcal{L}_\phi$ and is strictly positive, $\|\log h_\phi\|_\infty<\infty$, $\lambda_\phi>0$, $\nu_\phi$ a fully supported probability measure such 
that $\int h_\phi\, \dd\nu_\phi=1$. Moreover, $P_\phi h_\phi=\lambda_\phi h_\phi$ and $P_\phi^* \nu_\phi=\lambda_\phi \nu_\phi$, and $\phi$ has a unique 
equilibrium state $\mu_\phi=h_\phi \nu_\phi$ which is mixing. \footnote{This means that for any pair of cylinders $B,B'$
$\lim_{n\to\infty} \mu_\phi(B\cap T^{-n}B')=\mu_\phi(B)\mu_\phi(B')$. In particular $\mu_\phi$ is ergodic.}
\item[B.]
There exists a positive sequence $(\epsilon_n)_{n\in\N}$ converging to zero, such that, for any $f\in \mathcal{L}_\phi$,
\begin{equation}\label{ineq-vero}
\left\| \frac{P_\phi^n f}{\lambda_\phi^n}-h_\phi \int f \dd\nu_\phi\right\|_\infty \leq \cte{backgroundthm}\, \epsilon_n 
\|f\|_{\mathcal{L}_\phi}, \;\forall n\in\N\,.
\end{equation}
Morover, one has the following behaviors:
\begin{enumerate}
\item
If $W_n(\phi)=O(\eta^n)$ for some $\eta\in (0,1)$, then there exists $\eta'\in (0,1)$ such that $\epsilon_n=O({\eta'}^n)$.
\item
If $W_n(\phi)=O(n^{-\alpha})$ for some $\alpha>0$, then $\epsilon_n=O(n^{-\alpha})$.
\item
If $W_n(\phi)=O(\theta^{(\log n)^{\alpha}})$ for some $\theta\in (0,1)$ and $\alpha>1$, then, for any $\epsilon>0$, 
$\epsilon_n=O(\theta^{(\log n)^{\alpha-\epsilon}})$.
\item 
If $W_n(\phi)=O(\e^{-cn^\alpha})$ for some $c>0$ and $\alpha\in (0,1)$, then there exists $c'>0$ such that 
$\epsilon_n=O\big(\e^{-c'n^{\frac{\alpha}{\alpha+1}}}\big)$.
\end{enumerate}
\end{itemize}
\end{theorem}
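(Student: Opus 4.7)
The plan is to prove the two parts separately: Part~A by the classical Schauder--Tychonoff / distortion argument of Walters, and Part~B by a Birkhoff-cone contraction analysis of the type used by Maume-Deschamps.

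For Part~A, I would first produce the conformal measure $\nu_\phi$ and the eigenvalue $\lambda_\phi$ as a fixed point of the continuous map $\nu\mapsto P_\phi^*\nu / (P_\phi^*\nu)(\un)$ on the weak-$\ast$ compact convex set of Borel probability measures on $\Omega$, setting $\lambda_\phi=(P_\phi^*\nu_\phi)(\un)$. Walters' condition then plays the role of a uniform distortion estimate: for every $n$, every $a\in A^n$ and every $x\in[a]$,
\[
\e^{-W(\phi)}\,\e^{S_n\phi(x)-n\log\lambda_\phi}\ \leq\ \nu_\phi([a])\ \leq\ \e^{W(\phi)}\,\e^{S_n\phi(x)-n\log\lambda_\phi}\,.
\]
The eigenfunction $h_\phi$ is then obtained as a uniform accumulation point of the Cesaro averages $\tfrac{1}{N}\sum_{k=0}^{N-1}\lambda_\phi^{-k}P_\phi^k\un$: the Walters bound forces them to satisfy $\var_n\leq c\,W_n(\phi)$ uniformly in $N$, so they are equicontinuous and, together with the two-sided distortion bound, converge along a subsequence to some $h_\phi\in\mathcal{L}_\phi$ that is bounded above and below. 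Standard arguments then yield uniqueness of $h_\phi$ and $\nu_\phi$, the equilibrium-state property of $\mu_\phi=h_\phi\nu_\phi$, and its mixing.

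For Part~B, I would set up a Birkhoff cone of positive, logarithmically regular functions,
\[
\mathcal{C}_\phi(M)=\{f\in\mathcal{L}_\phi:\ f>0 \text{ and } \var_n(\log f)\leq M\,W_n(\phi)\ \forall n\in\N\},
\]
and show that, for $M$ large enough, the normalised operator $\widetilde P_\phi:=\lambda_\phi^{-1}P_\phi$ maps $\mathcal{C}_\phi(M)$ strictly into a rescaled copy of itself. This strict invariance is the key technical step: it relies on Walters' condition to split $\log\widetilde P_\phi f$ at depth $n$ into a dominant normalised-pressure contribution plus a residual term of size $O(W_n(\phi))$. Birkhoff's theorem then gives uniform contraction in the Hilbert projective metric on $\mathcal{C}_\phi(M)$ at some geometric rate $\tau\in(0,1)$, which, combined with the uniform upper/lower bounds available on the cone, translates into a sup-norm statement; by writing a general $f\in\mathcal{L}_\phi$ as a difference of two cone elements (after adding a large constant), one obtains \eqref{ineq-vero} with an effective constant $\cte{backgroundthm}$.

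The four rate regimes follow from a telescoping refinement. Splitting $P_\phi^n=\widetilde P_\phi^{\,n-p}\circ P_\phi^{p}$, one gets a factor $\tau^{n-p}$ from the Hilbert-metric contraction on scales $\geq p$ and a residual oscillation of order $W_p(\phi)$ coming from how well $P_\phi^p\un$ already sits inside the cone. Optimising $\tau^{n-p}+W_p(\phi)$ over $p$ then yields: (1) a pure exponential rate when $W_p(\phi)$ is geometric, with no balancing needed; (2) the polynomial rate $n^{-\alpha}$ by taking $p\sim(\alpha/\log\tau^{-1})\log n$; (3) the stretched-logarithmic rate of item~3 by an analogous log-scale choice of $p$; (4) the stretched-exponential rate $\e^{-c'n^{\alpha/(\alpha+1)}}$ by balancing $\tau^{n-p}$ against $\e^{-cp^\alpha}$, which gives $p\sim n^{1/(\alpha+1)}$. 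I expect the main obstacle to be precisely the strict invariance of $\mathcal{C}_\phi(M)$: summable variation alone does not suffice, and one really needs the full Walters bound $\sup_{a,x,y}|S_n\phi(ax)-S_n\phi(ay)|\leq W(\phi)$ to prevent variation terms from accumulating along the inductive passage from $P_\phi f$ to $P_\phi^n f$.
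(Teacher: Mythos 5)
First, a caveat about the comparison itself: the paper does not prove Theorem~\ref{backgroundthm} at all. Part~A is imported from Walters \cite{1978Walters} and Part~B from Maume-Deschamps' unpublished thesis \cite{maume-phd}, so there is no in-paper proof to measure you against. Your outline of Part~A (conformal measure via a Schauder--Tychonoff fixed point for the normalized dual operator, Gibbs-type distortion bounds from \eqref{Wphi}, eigenfunction as an accumulation point of Ces\`aro averages of $\lambda_\phi^{-k}P_\phi^k\un$) is the standard Walters argument and is fine in outline, and your identification of Birkhoff cones as the engine of Part~B matches what the paper's introduction says about Maume-Deschamps' method.

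The genuine gap is in the quantitative half of Part~B. If a single cone $\mathcal{C}_\phi(M)$ were strictly invariant with finite projective diameter image, Birkhoff's theorem would give a scale-independent contraction factor $\tau$ and hence \emph{exponential} convergence in \eqref{ineq-vero} for every $f\in\mathcal{L}_\phi$ --- contradicting items 2--4 and the known subexponential mixing rates for non-H\"older potentials (cf.\ \cite{pollicott2000}). And strict invariance does fail: one computes $\var_n\bigl(\log\widetilde{P}_\phi f\bigr)\leq M\,W_{n+1}(\phi)+c\,W_n(\phi)$ for $f$ in the cone, so landing back in $\mathcal{C}_\phi(M)$ requires $M\bigl(1-W_{n+1}(\phi)/W_n(\phi)\bigr)\geq c$ uniformly in $n$, which is impossible in every subexponential regime since there $W_{n+1}(\phi)/W_n(\phi)\to1$ (this is exactly the hypothesis \eqref{W/W=1} of Lemma~\ref{comparaisondistances}). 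The true argument must therefore work with a family of cones indexed by a cut-off scale, with a contraction coefficient that degrades as the scale grows; that scale-dependence is precisely what produces rates 1--4. Your balancing $\inf_p\bigl(\tau^{\,n-p}+W_p(\phi)\bigr)$ with a fixed $\tau$ cannot reproduce them: in item~4 it would give $O(\e^{-c'n^{\alpha}})$ (take $p=n/2$; since $\alpha<1$ the $\tau^{n/2}$ term is negligible), which is strictly \emph{better} than the stated $\e^{-c'n^{\alpha/(\alpha+1)}}$ and hence cannot be the actual mechanism; and in item~2 your choice $p\sim(\alpha/\log\tau^{-1})\log n$ makes $W_p(\phi)=O((\log n)^{-\alpha})$, not $O(n^{-\alpha})$ (you would need $n-p\sim\log n$, not $p\sim\log n$). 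So the core technical content --- how the projective diameter and contraction rate depend on the regularity scale of $W_p(\phi)$ --- is missing, and the rate computations as written do not close.
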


The fact that $\mu_\phi$ is an equilibrium state means that it maximizes the functional $\mu\mapsto h(\nu)+\int \phi\,\dd\nu$
over the set of shift-invariant probability measures on $\Omega$, where $h(\nu)$ is the entropy of $\nu$, and the maximum
is equal to the topological pressure $P(\phi)$ of $\phi$ (see {\em e.g.} \cite{1998Keller}), and we have $P(\phi)=\log \lambda_\phi$.

Let us give examples of potentials. First consider $A=\{-1,1\}$ and $p>1$, and define
\[
\phi(x)=-\sum_{n\geq 2} \frac{x^0x^{n-1}}{n^p}\,.
\]
One can check that $W_n(\phi)=O(n^{-p+2})$. This is the analog of the so-called long-range Ising model on $\mathds{N}$.
Let us now take $A=\{0,1\}$ and let $[0^k1]=\{x\in\Omega : x^i=0, 0\leq i\leq k-1,\,\text{and}\, x_k=1\}$. Let $(v_n)$ be
a monotone decreasing sequence of real numbers converging to $0$ and define
\[
\phi(x)=
\begin{cases}
v_k & \text{if} \; x\in [0^k1]\\
0 & \text{if} \; x=(0,0,\ldots) \\
0 & \text{otherwise}\,.
\end{cases}
\]
One can check that $\var_n(\phi)=v_n$. This example is taken from \cite{pollicott2000}.

\begin{remark}\label{rem-uem}
Let us briefly explain how we can interpret an equilibrium state for a non Lipschitz potential as an absolutely continuous invariant measure of a piecewise
expanding map of the unit interval with a Markov partition.
It is well-known that a uniformly expanding map $S$ of the unit interval with a finite Markov partition which is piecewise
$C^{1+\eta}$, for some $\eta>0$, can be coded by a subshift of finite type $(\Omega,T)$ over a finite alphabet. Then, $-\log |S'|$ induces a potential $\phi$ 
on $\Omega$ which is Lipschitz (with respect to $d_\theta$). The pullback of $\mu_\phi$ is then the unique absolutely continuous invariant probability 
measure for $S$. In \cite{2012ColletGalves}, the authors showed that, given $\phi$ which is not Lipschitz,
one can construct a uniformly expanding map of the unit interval with a finite Markov partition which is
piecewise $C^1$, but not piecewise $C^{1+\eta}$ for any $\eta>0$, and such that the pullback of $\mu_\phi$ is the Lebesgue measure.
\end{remark}

\section{Main result and applications}

\subsection{Gaussian concentration bound}

We can now state our main theorem whose proof is deferred to Section \ref{sec:proof-main-thm}. We start by the definition
of separately $d_\theta$-Lipschitz functions.

\begin{definition}\label{def-sep-dphi-lip}
A function $K:\Omega^n\to\R$ is said to be separately $d_\theta$-Lipschitz if, for all $i$,
there exists a constant $\Lip_{\theta,i}(K)$ with 
\[
|K(x_{0},\dots,x_{i},\dots,x_{n-1})-K(x_{0},\dots,x'_{i},\dots,x_{n-1})| \leq \Lip_{\theta,i}(K)\, d_\theta(x_{i},x'_{i}).
\]
for all points $x_0,\dots,x_i,\dots,x_{n-1},x'_{i}$ in $\Omega$.
\end{definition}

\begin{theorem}\label{maintheo}
Suppose that $\phi$ satisfies one of the following conditions:
\begin{enumerate}
\item
$W_n(\phi)=O(\theta^n)$ (that is, $\phi$ is $d_\theta$-Lipschitz);
\item
$W_n(\phi)=O(n^{-\alpha})$ for some $\alpha>1$;
\item
$W_n(\phi)=O(\theta^{(\log n)^{\alpha}})$ for some $\theta\in (0,1)$ and $\alpha>1$; 
\item
$W_n(\phi)=O(\e^{-cn^\alpha})$ for some $c>0$ and $\alpha\in (0,1)$.
\end{enumerate} 
Then the process $(x,Tx,\ldots)$, with $x$ distributed according to $\mu_\phi$, satisfies the following Gaussian concentration bound. There exists $\cte{maintheo} >0$ such that for any $n\in\N$ and for any separately $d_\theta$-Lipschitz function $K:\Omega^n\to\R$, we have
\begin{align}
\nonumber
& \int \exp\left( K\left(x,Tx,\dots,T^{n-1}x\right)\right) \dd\mu_\phi(x) \\
& \leq \exp\left( \int K\left(x,Tx,\dots,T^{n-1}x\right) \dd\mu_\phi(x)\right)
\exp\left(\cte{maintheo} \sum_{j=0}^{n-1} \Lip_{\theta,j}(K)^2\right)\,.
\label{gcb}
\end{align}
\end{theorem}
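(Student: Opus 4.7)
The plan is the martingale-difference / exponential-moment strategy of \cite{2012ChazottesGouezel}, in which the Lipschitz spectral gap is replaced by the Maume-Deschamps estimate \eqref{ineq-vero}. Let $K_n(x) := K(x, Tx,\ldots, T^{n-1}x)$, introduce the filtration $\mathcal{F}_k := \sigma(x^0, \ldots, x^{k-1})$ on $(\Omega, \mu_\phi)$ with $\mathcal{F}_0$ trivial, and consider the martingale increments $D_k := \E[K_n \mid \mathcal{F}_{k+1}] - \E[K_n \mid \mathcal{F}_k]$. Since $K_n$ is bounded and $\mathcal{F}_k$ increases to the full Borel $\sigma$-algebra, $\sum_k D_k = K_n - \int K_n\, \dd\mu_\phi$ almost surely, and Azuma's exponential inequality gives
\begin{equation*}
\int \exp\!\Big(K_n - \int K_n\, \dd\mu_\phi\Big) \dd\mu_\phi \;\leq\; \exp\!\Big(\tfrac12 \sum_{k \geq 0} \|D_k\|_\infty^2\Big).
\end{equation*}
The entire theorem thus reduces to the deterministic inequality $\sum_k \|D_k\|_\infty^2 \leq 2\, \cte{maintheo}\sum_j \Lip_{\theta,j}(K)^2$.

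I would obtain it from an oscillation estimate $\|D_k\|_\infty \leq C \sum_{j\geq 0} \omega_{k,j}\, \Lip_{\theta,j}(K)$ for a kernel $\omega_{k,j}$ with $\sup_k \sum_j \omega_{k,j} + \sup_j \sum_k \omega_{k,j} < \infty$; Cauchy--Schwarz then yields $\sum_k \|D_k\|_\infty^2 \leq C' \sum_j \Lip_{\theta,j}(K)^2$ immediately. To prove such a bound, pick $y,z$ agreeing on positions $0,\ldots,k-1$; since $\E[K_n\mid\mathcal{F}_k]$ is constant along the pair, $D_k(y)-D_k(z)=\int(F_y-F_z)\,\dd\nu_y+\int F_z\,\dd(\nu_y-\nu_z)$, where $F_y(w):=K_n(y^0\ldots y^k w)$ and $\nu_y$ is the conditional law of $(x^{k+1},x^{k+2},\ldots)$ under $\mu_\phi$ given $x^0\ldots x^k = y^0\ldots y^k$. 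The first term only feels the past slots $j \leq k$ (future slots depend only on $w$): such a slot differs between $F_y(w)$ and $F_z(w)$ at a single coordinate of depth $k-j$, contributing at most $\Lip_{\theta,j}(K)\,\theta^{k-j}$.

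The non-trivial second term carries the future slots $j > k$. Writing $\mu_\phi=h_\phi\nu_\phi$ and using conformality of $\nu_\phi$, the density of $\nu_y$ with respect to $\nu_\phi$ is proportional to $h_\phi(y^0\ldots y^k w)\, e^{S_{k+1}\phi(y^0\ldots y^k w)}$, and both factors respond to changing $y^k$ only through Walters' modulus $W_{\bullet}(\phi)$ and $\Lip_\phi(h_\phi)$, giving uniform comparisons. To extract a decay in $j$ I then perform the \emph{second telescoping}: freezing all other slots of $F_z$, write the expectation of slot $j>k$ against $\nu_y$ as $\lambda_\phi^{-(j-k-1)}P_\phi^{j-k-1}$ applied (up to the $\nu_y$-density) to a function whose $\mathcal{L}_\phi$-norm is controlled by $\Lip_{\theta,j}(K)$ and $\|h_\phi\|_{\mathcal{L}_\phi}$, then replace it by $h_\phi \int\!\cdot\,\dd\nu_\phi$ via \eqref{ineq-vero}, at the cost of an error $\epsilon_{j-k-1}\,\Lip_{\theta,j}(K)$. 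The resulting weights are $\omega_{k,j}=\theta^{k-j}$ for $j\leq k$ and $\omega_{k,j}=\epsilon_{j-k-1}+W_{j-k}(\phi)$ for $j>k$; the required two-sided summability $\sum_n(\theta^n+\epsilon_n+W_n(\phi))<\infty$ holds in each of the four regimes of the hypothesis (in case 2 this is precisely why $\alpha>1$ is needed).

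The main obstacle will be the careful bookkeeping of the second telescoping: one has to localise the effect of each future slot of the \emph{nonlinear} $K$, keep the $\mathcal{L}_\phi$-norm of the test function fed into $P_\phi^{j-k-1}$ uniformly bounded in $j,k$ (and independent of $K$ apart from the factor $\Lip_{\theta,j}(K)$), and check that the normalising constants $Z_y = \int h_\phi(y^0\ldots y^k w)e^{S_{k+1}\phi(y^0\ldots y^k w)}\,\dd\nu_\phi(w)$ satisfy matching Walters-type controls so that their logarithmic variation in $y^0\ldots y^k$ does not degrade with $k$. Once these uniform estimates are assembled, the kernel bound follows by routine manipulation, and \eqref{gcb} holds with $\cte{maintheo}$ depending on $\phi$ only through $\|\log h_\phi\|_\infty$, $W_0(\phi)$, $\Lip_\phi(h_\phi)$ and the summable tail $\sum_n \epsilon_n$.
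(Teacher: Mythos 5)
Your overall architecture is the same as the paper's: a martingale-difference decomposition, Azuma--Hoeffding, a second telescoping over the slots of $K$ feeding test functions into the Maume--Deschamps estimate \eqref{ineq-vero}, and a Young/Schur-type summation of a convolution kernel against $(\Lip_{\theta,j}(K))_j$. The genuine difference is the direction of your filtration. The paper conditions on the \emph{future}: it uses the decreasing $\sigma$-fields $\mathcal{F}_p$ generated by $(x_j)_{j\ge p}$ on the orbit space, so that $\E(K\mid\mathcal{F}_p)$ is \emph{exactly} $\sum_{T^py=x_p}g^{(p)}(y)K(y,\dots,T^{p-1}y,x_p,\dots)$, i.e.\ a normalized transfer-operator sum; the second telescoping then produces functions $f_i$ lying in $\mathcal{L}_\phi$ to which \eqref{ineq-vero} applies directly, with only the distortion bound \eqref{distortion} and Lemma \ref{comparaisondistances} as inputs. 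You condition on the \emph{past}, which forces you to work with the conditional law $\nu_y$ of the future given a finite prefix. This is doable but costs you in two places. First, comparing $\nu_y$ and $\nu_z$ for prefixes differing at one site requires controlling $S_{k+1}\phi(y^{0,k}w)-S_{k+1}\phi(z^{0,k}w)$ for a \emph{fixed tail and varying prefix}; this is bounded by $\sum_{m\le k}\var_m(\phi)$, i.e.\ it needs summable variation, not the Walters modulus $W_p(\phi)$ (which controls fixed prefix, varying tails). Under hypotheses 1--4 this follows from \eqref{cphivar}, but it is a strictly stronger input than you acknowledge, and it is exactly what blocks the forward approach for general Walters potentials. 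Second, your weights for $j>k$ are optimistic: the increment $H_j$ of a future slot is \emph{not} a function of $T^{j-k-1}w$ alone, and $\|\nu_y-\nu_z\|_{TV}$ is $O(1)$, so $\|H_j\|_\infty\le\Lip_{\theta,j}(K)$ alone yields no decay in $j-k$. You must first condition $H_j$ on $T^{j-k-1}w$ (sum over preimages with the normalized Jacobian), and the resulting function of $v=T^{j-k-1}w$ has $\mathcal{L}_\phi$-norm of order $\sum_{j'\le j}\Lip_{\theta,j'}(K)\,\theta^{j-j'}$ plus a distortion term --- not just $\Lip_{\theta,j}(K)$. The correct kernel is therefore the triple convolution $\epsilon\star\theta^{\bullet}\star\Lip_\theta$, as in Lemmas \ref{mainlemma} and \ref{lem_controle_Kp}; this is still summable by Young's inequality, so your scheme closes, but the ``routine manipulation'' you defer is precisely the mirror image of the paper's core Lemma \ref{lem_controle_Kp}. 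In short: what the paper's backward filtration buys is that the transfer operator contracts in the same direction as the conditioning, so the hard lemma is stated once and cleanly; your forward version proves the same theorem under the same hypotheses at the price of re-deriving that lemma through the conditional futures and their normalizers $Z_y$.
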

Three remarks are in order.
First, we conjecture that this theorem is valid under the condition $\sum_n \var_n(\phi)<\infty$.
Second, it would be useful to have an explicit formula for $\cte{maintheo}$ in \eqref{gcb}. Unfortunately, this constant is proportional to 
$\cte{backgroundthm}$ (see Theorem \ref{backgroundthm}) which is cumbersome since it involves the eigendata of $P_\phi$ .
Third, for the sake of simplicity, we considered the full shift $A^{\N}$. In fact, our results remain true if $\Omega\subset A^{\N}$ is a topologically mixing one-sided subshift of finite type. Moreover, one can extend Theorem \ref{maintheo} to bilateral subshifts of finite type by a trick used in \cite{2012ChazottesGouezel}.

We now give some corollaries of our main theorem that we will be used in the section on applications.
First, by \eqref{cphivar} we immediately obtain the following corollary.
\begin{corollary}
If there exists $\alpha>2$ such that 
\[
\var_n (\phi) =O\left(\frac{1}{n^\alpha}\right)
\]
then we have the Gaussian concentration bound \eqref{gcb}.
\end{corollary}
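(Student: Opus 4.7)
The plan is to reduce the hypothesis on $\var_n(\phi)$ to one of the four $W_n(\phi)$-type hypotheses covered by Theorem \ref{maintheo}, and then simply invoke that theorem. The natural tool is inequality \eqref{cphivar}, whose right half says
\[
W_p(\phi)\leq \sum_{k=p+1}^{\infty}\var_k(\phi).
\]
So I would first plug in the hypothesis $\var_n(\phi)=O(n^{-\alpha})$ with $\alpha>2$, giving
\[
W_p(\phi)\leq C\sum_{k=p+1}^{\infty}k^{-\alpha}=O\bigl(p^{-(\alpha-1)}\bigr).
\]
Setting $\beta:=\alpha-1$, the assumption $\alpha>2$ translates into $\beta>1$, so $W_p(\phi)=O(p^{-\beta})$ with $\beta>1$, which is exactly hypothesis (2) of Theorem \ref{maintheo}.

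Having verified the relevant Walters-type decay, the Gaussian concentration bound \eqref{gcb} follows immediately from Theorem \ref{maintheo} applied under condition (2), with the same constant $\cte{maintheo}$. There is no obstacle here beyond bookkeeping: the only ingredient is the elementary tail estimate $\sum_{k>p}k^{-\alpha}\asymp p^{-(\alpha-1)}$ valid for $\alpha>1$, together with the fact that $\alpha>2$ is what is needed to land strictly above $1$ after losing one power in the summation. In particular, the threshold $\alpha>2$ in the corollary is dictated precisely by this loss of one exponent in passing from $\var_n(\phi)$ to $W_n(\phi)$ via \eqref{cphivar}, combined with the requirement $\beta>1$ in hypothesis (2) of the main theorem.
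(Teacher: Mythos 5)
Your proof is correct and is exactly the argument the paper intends: the corollary is stated as an immediate consequence of \eqref{cphivar}, and your tail-sum estimate $W_p(\phi)\leq\sum_{k>p}\var_k(\phi)=O(p^{-(\alpha-1)})$ with $\alpha-1>1$ places $\phi$ under hypothesis (2) of Theorem \ref{maintheo}. Nothing further is needed.
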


Next, we get the following concentration inequalities from \eqref{gcb}.
\begin{corollary}\label{coromaintheo}
For all $u> 0$, we have
\begin{align}
\nonumber
& \mu_\phi\left(x\in\Omega: K\left(x,Tx,\dots,T^{n-1}x\right)
-\int K\left(y,Ty,\dots,T^{n-1}y\right) \dd\mu_\phi(y)\geq u\right)\\
& \leq \exp\left(-\frac{u^{2}}{4\cte{maintheo}\sum_{i=0}^{n-1}\Lip_{\theta,i}(K)^{2}}\right)
\label{dev-gcb}
\end{align}
and
\begin{align}
\nonumber
& \mu_\phi\left(x\in\Omega:\left|K\left(x,Tx,\dots,T^{n-1}x\right)
-\int K\left(y,Ty,\dots,T^{n-1}y\right) \dd\mu_\phi(y)\right|\geq u\right)\\
& \leq 2\exp\left(-\frac{u^{2}}{4\cte{maintheo}\sum_{i=0}^{n-1}\Lip_{\theta,i}(K)^{2}}\right)\,.
\label{dev-abs-gcb}
\end{align}
\end{corollary}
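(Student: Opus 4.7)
The plan is to obtain \eqref{dev-gcb} from \eqref{gcb} by the standard Chernoff/exponential-Markov argument, and then get \eqref{dev-abs-gcb} by applying the one-sided bound to both $K$ and $-K$ and taking a union bound.

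First I would replace $K$ by $\lambda K$ in Theorem \ref{maintheo}, for an arbitrary parameter $\lambda>0$. Since $\Lip_{\theta,i}(\lambda K)=\lambda\Lip_{\theta,i}(K)$ for every $i$, inequality \eqref{gcb} gives
\[
\int \exp\!\Big(\lambda\big(K(x,Tx,\dots,T^{n-1}x)-\textstyle\int K(y,Ty,\dots,T^{n-1}y)\,\dd\mu_\phi(y)\big)\Big)\dd\mu_\phi(x)
\leq \exp\!\Big(\cte{maintheo}\,\lambda^2\sum_{i=0}^{n-1}\Lip_{\theta,i}(K)^2\Big).
\]
Next I would apply Markov's inequality to the event $\{K-\int K\geq u\}$ through the function $\exp(\lambda\,\cdot\,)$, which yields
\[
\mu_\phi\!\left(K(x,\dots,T^{n-1}x)-\textstyle\int K\,\dd\mu_\phi\geq u\right)
\leq \exp\!\Big(-\lambda u+\cte{maintheo}\,\lambda^2\sum_{i=0}^{n-1}\Lip_{\theta,i}(K)^2\Big).
\]

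Then I would optimize the right-hand side in $\lambda>0$. The exponent is a quadratic in $\lambda$, minimized at $\lambda^\ast=u\big/\big(2\cte{maintheo}\sum_{i=0}^{n-1}\Lip_{\theta,i}(K)^2\big)$, which is strictly positive (we may assume the denominator is positive, since otherwise $K$ is $\mu_\phi$-a.s.\ constant and the bound is trivial). Substituting $\lambda^\ast$ yields the exponent $-u^2\big/\big(4\cte{maintheo}\sum_{i=0}^{n-1}\Lip_{\theta,i}(K)^2\big)$, which is exactly \eqref{dev-gcb}.

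Finally, for the absolute-value version \eqref{dev-abs-gcb}, I would observe that $-K$ is also separately $d_\theta$-Lipschitz with $\Lip_{\theta,i}(-K)=\Lip_{\theta,i}(K)$, so \eqref{dev-gcb} applied to $-K$ gives the symmetric lower tail bound. A union bound on the two events then produces the factor $2$ and gives \eqref{dev-abs-gcb}. There is no real obstacle here; the only thing to be careful with is making sure the substitution $K\mapsto\lambda K$ is legitimate, i.e.\ that $\lambda K$ is still separately $d_\theta$-Lipschitz and that $\cte{maintheo}$ in \eqref{gcb} depends neither on $n$ nor on $K$, both of which are guaranteed by the statement of Theorem \ref{maintheo}.
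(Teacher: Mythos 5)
Your proposal is correct and follows exactly the paper's own argument: Chernoff's bounding method (apply \eqref{gcb} to $\xi K$, use Markov's inequality, optimize over the parameter), followed by applying the one-sided bound to $-K$ and summing the two tails to get \eqref{dev-abs-gcb}. No issues.
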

\begin{proof}
Inequality \eqref{dev-gcb} follows by a well-known trick referred to as Chernoff's bounding method \cite{boucheron_lugosi_massard}. Let us give the proof for completeness.
Let $u>0$. For any random variable $Y$, Markov's inequality tells us that
$\proba(Y\geq u)\leq \e^{-\xi u} \E\left(\e^{\xi Y}\right)$ for all $\xi>0$.
Now let 
\[
Y=K\left(x,Tx,\dots,T^{n-1}x\right)-\int K\left(y,Ty,\dots,T^{n-1}y\right) \dd\mu_\phi(y)\,.
\]
Using \eqref{gcb} and optimizing over $\xi$, we get \eqref{dev-gcb}. Inequality \eqref{dev-abs-gcb} follows
by applying \eqref{dev-gcb} to $-K$ and then summing up the two bounds.
\end{proof}
The last corollary we want to state is about the variance of any separately $d_\theta$-Lipschitz function.
\begin{corollary}
We have
\begin{align}
\nonumber
\label{devroye}
& \int \left(K\left(x,Tx,\dots,T^{n-1}x\right)
-\int K\left(y,Ty,\dots,T^{n-1}y\right) \dd\mu_\phi(y)\right)^2 \dd\mu_\phi(x)\\
& \leq 2\cte{maintheo} \sum_{i=0}^{n-1}\Lip_{\theta,i}(K)^{2}\,.
\end{align}
\end{corollary}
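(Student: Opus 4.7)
My plan is to extract the variance bound directly from the exponential moment inequality \eqref{gcb} by a Taylor expansion in a scalar parameter, rather than going through the tail bound \eqref{dev-abs-gcb} (which would yield a suboptimal constant). The key observation is that if we replace $K$ by $tK$ for $t\in\R$, the separately Lipschitz constants scale as $\Lip_{\theta,i}(tK)=|t|\,\Lip_{\theta,i}(K)$, so \eqref{gcb} remains applicable.

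Concretely, define
\[
Y(x)=K(x,Tx,\dots,T^{n-1}x)-\int K(y,Ty,\dots,T^{n-1}y)\,\dd\mu_\phi(y),
\qquad L=\sum_{i=0}^{n-1}\Lip_{\theta,i}(K)^{2}.
\]
Applying Theorem \ref{maintheo} with $tK$ in place of $K$, and dividing by $\exp(t\int K\,\dd\mu_\phi)$, I would obtain
\[
\int \e^{tY(x)}\,\dd\mu_\phi(x)\ \leq\ \exp\!\bigl(\cte{maintheo}\,t^{2}\,L\bigr),\qquad t\in\R.
\]

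Next I would note that $K$, being separately $d_\theta$-Lipschitz on the compact space $\Omega^n$, is jointly continuous and therefore bounded, so $Y$ is bounded and all its moments exist; moreover $t\mapsto \int \e^{tY}\,\dd\mu_\phi$ is entire and admits the Taylor expansion
\[
\int \e^{tY}\,\dd\mu_\phi\ =\ 1+t\int Y\,\dd\mu_\phi+\frac{t^{2}}{2}\int Y^{2}\,\dd\mu_\phi+O(t^{3})\ =\ 1+\frac{t^{2}}{2}\int Y^{2}\,\dd\mu_\phi+O(t^{3}),
\]
since $\int Y\,\dd\mu_\phi=0$. On the other hand $\exp(\cte{maintheo}\,t^{2}L)=1+\cte{maintheo}\,t^{2}L+O(t^{4})$. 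Substituting these two expansions into the inequality above, subtracting $1$, dividing by $t^{2}>0$, and letting $t\to 0^{+}$ yields
\[
\tfrac{1}{2}\int Y^{2}\,\dd\mu_\phi\ \leq\ \cte{maintheo}\,L,
\]
which is the desired inequality \eqref{devroye}. The argument is essentially mechanical once \eqref{gcb} is in hand; I do not anticipate any real obstacle, the only thing to double-check being the boundedness of $K$, which follows from the triangle inequality $|K(x)-K(x')|\leq\sum_i\Lip_{\theta,i}(K)\,d_\theta(x_i,x'_i)$ applied on the compact product space $\Omega^n$.
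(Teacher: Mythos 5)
Your proposal is correct and follows essentially the same route as the paper: apply \eqref{gcb} to $tK$ (the paper uses the parameter $\xi$), Taylor-expand both sides of the resulting exponential moment bound in the scalar parameter, divide by $t^2$, and let $t\to 0$. The additional remark on the boundedness of $K$ and the entirety of the moment generating function is a harmless elaboration of a point the paper leaves implicit.
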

\begin{proof}
To alleviate notations, we simply write $K$ instead of $K\left(x,Tx,\dots,T^{n-1}x\right)$, $\int K$ instead of 
$\int K\left(y,Ty,\dots,T^{n-1}y\right) \dd\mu_\phi(y)$, and so on and so forth.
Applying \eqref{gcb} to $\xi K$ where $\xi$ is any real number different from $0$, we get
\[
\int \exp\left(\xi \Big(K-\int K\Big)\right)\leq \exp\left(\cte{maintheo} \xi^2 \sum_{j=0}^{n-1} \Lip_{\theta,j}(K)^2\right)\,.
\]
Now by Taylor expansion we get
\[
1+\frac{\xi^2}{2}\int\Big(K-\int K\Big)^2 + o(\xi^2)
\leq 1+\cte{maintheo} \xi^2 \sum_{j=0}^{n-1} \Lip_{\theta,j}(K)^2 +o(\xi^2)\,.
\]
Dividing by $\xi^2$ on both sides and then taking the limit $\xi\to 0$, we obtain the desired inequality.
\end{proof}

Although we were not able to prove the Gaussian concentration bound for separately $d_\phi$-Lipschitz functions,
for many applications separately $d_\theta$-Lipschitz functions are more natural. Furthermore there is
a notable class of separately $d_\phi$-Lipschitz functions, namely Birkhoff sums of the potential itself, for which
our theorem holds. Indeed, when $\phi\in \mathcal{L}_\phi$, 
the function $K(x,\ldots,T^{n-1}x)=S_n \phi(x)$ is obviously separately $d_\phi$-Lipschitz and 
$\Lip_{\phi,j}(K)=\Lip_\phi(\phi)$ for all $j$.  We have the following result.

\begin{theorem}\label{GCBBirkhoff}
Under the hypotheses of Theorem \ref{maintheo}, there exists $\cte{GCBBirkhoff}>0$ such that, for any $\psi\in \mathcal{L}_\phi$, for all $u> 0$, and for all $n\in\N$, we have
\begin{align}
\nonumber
& \mu_\phi\left(x\in\Omega: \frac{1}{n}S_n\psi(x)
- \int \psi\, \dd\mu_\phi\geq u\right)\\
& \leq \exp\left(-\frac{n u^{2}}{4\cte{GCBBirkhoff}\Lip_{\phi}(\psi)^{2}}\right)\,.
\label{dev-gcb-birkhoff}
\end{align}
\end{theorem}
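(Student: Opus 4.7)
The plan is to bound the moment generating function of $S_n\psi - n\int\psi\,d\mu_\phi$ uniformly in $n$, and then apply Chernoff's bounding method as in the proof of Corollary \ref{coromaintheo}. Concretely, I aim to show that there exists a constant $C>0$ depending only on $\phi$ such that, for every $\psi\in\mathcal{L}_\phi$, every $n\in\N$ and every $\xi\in\R$,
\[
\int \exp\!\Big(\xi\big(S_n\psi - n\!\int\!\psi\,d\mu_\phi\big)\Big)\,d\mu_\phi \;\leq\; \exp\!\big(C\,\xi^2\, n\, \Lip_\phi(\psi)^2\big),
\]
from which optimization in $\xi>0$ yields \eqref{dev-gcb-birkhoff} with $\cte{GCBBirkhoff}=C$. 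The candidate function in the framework of Theorem \ref{maintheo} is $K(x_0,\ldots,x_{n-1})=\xi\sum_{i=0}^{n-1}\psi(x_i)$, whose evaluation along an orbit is exactly $\xi\,S_n\psi$. By the definition of $\mathcal{L}_\phi$, this $K$ is separately $d_\phi$-Lipschitz with constants $|\xi|\Lip_\phi(\psi)$, but in general not separately $d_\theta$-Lipschitz, so Theorem \ref{maintheo} does not apply directly.

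The strategy is to rerun the proof of Theorem \ref{maintheo} on this specific $K$, replacing $d_\theta$ and $\theta^p$ systematically by $d_\phi$ and $W_p(\phi)$. I would introduce the filtration $\mathcal{F}_k=\sigma(x^0,\ldots,x^{k-1})$ and decompose
\[
S_n\psi-\!\int\!S_n\psi\,d\mu_\phi \;=\;\sum_{k\geq 1} D_k,\qquad D_k:=\E[S_n\psi\mid\mathcal{F}_k]-\E[S_n\psi\mid\mathcal{F}_{k-1}],
\]
which is a martingale difference sequence under $\mu_\phi$. Each $D_k$ can be rewritten, using the Gibbs description of $\mu_\phi$, in terms of $P_\phi$ applied to localized $h_\phi$-weighted indicators of cylinders. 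The second telescoping argument of \cite{2012ChazottesGouezel}, combined with the convergence rates $\epsilon_j$ from Theorem \ref{backgroundthm} and the inequality $\var_p(\psi)\leq\Lip_\phi(\psi)\,W_p(\phi)$, should deliver a uniform bound $\|D_k\|_\infty\leq C''\,\Lip_\phi(\psi)$ for $1\leq k\leq n$ and a rapidly decaying tail $\|D_k\|_\infty\leq C''\,\Lip_\phi(\psi)\,\epsilon_{k-n}$ for $k>n$. Azuma's exponential inequality for bounded martingale differences then yields the displayed moment bound with $\sum_k \|D_k\|_\infty^2$ bounded by a constant multiple of $n\,\Lip_\phi(\psi)^2$.

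The main obstacle is the detailed bookkeeping of the $D_k$ in the $d_\phi$ setting, in particular verifying summability of the tail series involving both $W_j(\phi)$ and $\epsilon_j$ uniformly across the four regimes of $W_n(\phi)$ listed in Theorem \ref{maintheo}; this is where Maume-Deschamps' sharper decay estimates become essential, since weaker polynomial rates would not suffice to absorb the additional factor $W_j(\phi)$. The decisive simplification that makes the Birkhoff-sum case accessible---as opposed to a general separately $d_\phi$-Lipschitz $K$, for which the authors state the result is not proved---is the translation-invariant additive structure $K=\xi\sum_i\psi(x_i)$ together with the coordinate-uniformity of the Lipschitz constants, which reduces the entire estimate to the one-coordinate $d_\phi$-Lipschitz control of $\psi$ itself and avoids any heterogeneous cross-coordinate interactions.
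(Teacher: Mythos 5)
Your plan is correct and is essentially the paper's intended proof: the authors leave Theorem \ref{GCBBirkhoff} to the reader, noting that the only modification is in Lemma \ref{lem_controle_Kp}, and the additive structure of $K=\xi S_n\psi$ is exactly the ``decisive simplification'' you identify --- the increment $H(y,\dotsc,T^iy)$ depends on the single coordinate $T^iy$, so the cross-coordinate sum $\sum_j\Lip_j(K)\theta^{i-j}$ collapses to one term controlled by $\Lip_\phi(\psi)\,d_\phi(T^iy,T^iy')\leq \Lip_\phi(\psi)\,d_\phi(z,z')$ via monotonicity of $W_p(\phi)$, after which Azuma--Hoeffding and Chernoff give \eqref{dev-gcb-birkhoff} exactly as you describe. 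One small correction: the martingale decomposition should be the reverse one with respect to the decreasing tail $\sigma$-algebras $\mathcal{F}_p$ of Section \ref{sec:proof-main-thm} (conditioning on $T^p x$), not the increasing filtration $\sigma(x^0,\dots,x^{k-1})$ you wrote, since it is the former that produces the sums $\sum_{T^p y=x_p}g^{(p)}(y)(\cdots)$ to which the transfer-operator estimate \eqref{ineq-vero-normalisee} applies; with that decomposition only $\sum_k\epsilon_k<\infty$ is needed for the tail, with no extra factor of $W_j(\phi)$ to absorb.
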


The proof is left to the reader. The main (simple) modification lies in the proof of Lemma \ref{lem_controle_Kp} in which considering a Birkhoff sum of a
$d_\phi$-Lipschitz function works fine, whereas we are stuck for a general separately $d_\phi$-Lipschitz function.

We will apply this result with $\psi=-\phi$ to derive concentration bounds for hitting times.
Note that under the assumptions of this theorem, $\{\psi(T^n x)\}_{n\geq 0}$ satisfies the central limit theorem \cite[Chapter 2]{maume-phd}.

\subsection{Related works}

The novelty here is to prove a Gaussian concentration bound for potentials with a variation decaying subexponentially. 
For $\phi$ is Lipschitz, Theorem \ref{maintheo} was proved in \cite{2012ChazottesGouezel}. 
The main goal of \cite{2012ChazottesGouezel} was then to deal with nonuniformly hyperbolic systems modeled by a Young
tower. 
For a tower with a return-time to the base with exponential tails, the authors of \cite{2012ChazottesGouezel} proved a Gaussian concentration 
bound. For polynomial tails, they proved moment concentration bounds. 
For $C^{1+\eta}$ maps of the unit interval with an indifferent fixed point, which are thus nonuniformly expanding, we are in the latter situation. In view of 
Remark \ref{rem-uem} above, we deal here with maps whose derivative is not H\"older continuous, but which are still uniformly expanding.

Let us also mention the paper \cite{2014GalloTakahashi} in which the authors prove a Gaussian concentration bound for 
$\phi$ of summable variation (whereas we need a bit more than summable). Their proof is based on coupling. However, they consider functions $K$ on 
$A^n$, not on $\big(A^\N\big)^n=\Omega^n$ as in this paper. For such functions, the analogue of $\Lip_{\theta,i}(K)$ is $\delta_i(K)=\sup\{|K(a^0,\ldots,a^i,
\ldots, a^{n-1})-K(b^0,\ldots,b^i,\ldots, b^{n-1})|: a^j=b^j, \forall j\neq i\}$.
It is clear that a Gaussian concentration bound for functions $K:\big(A^\N\big)^n\to\R$ implies a Gaussian concentration bound
for functions $K:A^n\to\R$, but the converse is not true.

\subsection{Applications}

We now give several applications of the Gaussian concentration bound \eqref{gcb} and its corollaries. Throughout this section, $\mu_\phi$ is the equilibrium state for a potential $\phi$ satisfying one of the conditions {\em 1-4} in Theorem \ref{maintheo}.

\subsubsection{Birkhoff sums}

Let $f:\Omega\to\R$ be a $d_\theta$-Lipschitz function and define
\[
K(x_0,\ldots,x_{n-1})=f(x_0)+\cdots+f(x_{n-1})
\]
whence $K(x,Tx,\ldots, T^{n-1}x)=f(x)+f(Tx)+\cdots+f(T^{n-1}x):=S_nf(x)$ is the Birkhoff sum of $f$.
Clearly, $\Lip_{\theta,i}(K)=\Lip_\theta(f)$ for all $i=0,\ldots, n-1$.
Applying Corollary \ref{coromaintheo} we immediately get
\begin{equation}\label{gcbbirkhoffsums}
\mu_\phi\left(x : \left|\frac{S_nf(x)}{n}-\int f \dd\mu_\phi\right|\geq u\right)
\leq 2 \exp\left(- \pcte{gcbbirkhoffsums} n u^2\right)
\end{equation}
for all $n\geq 1$ and $u\in\R_+$, where 
\[
\pcte{gcbbirkhoffsums}=\frac{1}{4\cte{maintheo}\Lip_\theta(f)^2}\,.
\]
This bound can be compared with the large deviation asymptotics \eqref{intro-ld}. We see that it has the right behavior in $n$.
Replacing $u$ by $u/\sqrt{n}$ in \eqref{gcbbirkhoffsums} we get
\[
\mu_\phi\left(x : \left|S_nf(x)-n\int f \dd\mu_\phi\right|\geq u\sqrt{n}\right)
\leq 2 \exp\left(- \pcte{gcbbirkhoffsums} u^2\right)
\]
for all $n$ and $u>0$. This can be compared with the central limit theorem \eqref{intro-clt}. We can see that the previous bound is consistent with that theorem. Note that the central limit is about convergence in law, whereas here we obtain a (non-asymptotic) bound from which one cannot deduce a convergence in law.

\subsubsection{Empirical frequency of blocks}

Take $f(x)=\un_{[a^{0,k-1}]}(x)$ where
\[
[a^{0,k-1}]=\{x\in\Omega : x^i=a^i,i=0,\ldots,k-1\}
\]
is a given $k$-cylinder.
Let
\[
\mathfrak{f}_n(x,a^{0,k-1})=\frac{\sum_{k=0}^{n-k} \un_{[a^{0,k-1}]}(T^k x)}{n-k+1}\,.
\] 

This is the `empirical frequency' of the block $a^{0,k-1}\in A^k$ in the orbit of $x$ up to time $n-k$. 
By Birkhoff's ergodic theorem, we know that, for each $a^{0,k-1}$, $\mathfrak{f}_n(x,a^{0,k-1})$ goes to
$\mu_\phi([a^{0,k-1}])$ for $\mu_\phi$-almost all $x$. The next theorem quantifies this asymptotic statement. Notice
that we can control the fluctuations of $\mathfrak{f}_n(x,a^{0,k-1})$ around $\mu_\phi([a^{0,k-1}])$ uniformly in
$a^{0,k-1}$.
\begin{theorem}
For all $n\in\N$, for all $1\leq k\leq n$ and for all $u>0$ we have
\[
\mu_\phi\left(x : \max_{a^{0,k-1}}\left|\mathfrak{f}_n(x,a^{0,k-1})-\mu_\phi([a^{0,k-1}])\right| 
 \geq \frac{(u + c\sqrt{k}\,)\, \theta^{-k}}{\sqrt{n-k+1}}\,\right) \\
\leq \e^{-\frac{u^2}{4\cte{maintheo}}}
\]
where $c=2\sqrt{2\cte{maintheo}\log |A|}$. Moreover, if $k=k(n)=\zeta \log n$ for some $\zeta>0$, then 
\begin{align*}
& \mu_\phi\left(x : \max_{a^{0,k(n)-1}}\left|\mathfrak{f}_n(x,a^{0,k(n)-1})-\mu_\phi([a^{0,k(n)-1}])\right| 
 \geq \frac{(u + c'\sqrt{\log n}\,) n^{\zeta |\log\theta|}}{\sqrt{n-k(n)+1}}\,\right) \\
& \leq \e^{-\frac{u^2}{4\cte{maintheo}}}
\end{align*}
where $c'=2\sqrt{2\zeta \cte{maintheo}\log |A|}$.
\end{theorem}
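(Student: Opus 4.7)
The plan is to apply the two-sided deviation bound \eqref{dev-abs-gcb} to a suitable separately $d_\theta$-Lipschitz observable and then take a union bound over all $|A|^k$ possible blocks. Fix $a^{0,k-1}\in A^k$ and set
\[
K(x_0,\ldots,x_{n-1}) = \sum_{j=0}^{n-k} \un_{[a^{0,k-1}]}(x_j),
\]
so that $K(x,Tx,\ldots,T^{n-1}x)=(n-k+1)\mathfrak{f}_n(x,a^{0,k-1})$ and, by $T$-invariance of $\mu_\phi$, $\int K\,\dd\mu_\phi=(n-k+1)\mu_\phi([a^{0,k-1}])$. The key geometric remark is that any two points on which $\un_{[a^{0,k-1}]}$ disagrees must differ at some coordinate $<k$, so $d_\theta\ge\theta^{k-1}$ between them; hence $\Lip_\theta(\un_{[a^{0,k-1}]})\le\theta^{1-k}\le\theta^{-k}$. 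Consequently $\Lip_{\theta,i}(K)\le\theta^{-k}$ for $0\le i\le n-k$ and $\Lip_{\theta,i}(K)=0$ otherwise, giving $\sum_i \Lip_{\theta,i}(K)^2\le(n-k+1)\theta^{-2k}$.

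Feeding these bounds into \eqref{dev-abs-gcb} with threshold $(n-k+1)\cdot(u+c\sqrt{k})\theta^{-k}/\sqrt{n-k+1}=\sqrt{n-k+1}\,(u+c\sqrt{k})\theta^{-k}$ makes the factors of $(n-k+1)$ and $\theta^{-2k}$ collapse and yields, for every fixed block,
\[
\mu_\phi\!\left(\left|\mathfrak{f}_n(x,a^{0,k-1})-\mu_\phi([a^{0,k-1}])\right|\ge\frac{(u+c\sqrt{k})\theta^{-k}}{\sqrt{n-k+1}}\right)\le 2\e^{-(u+c\sqrt{k})^2/(4\cte{maintheo})}.
\]
A union bound over the $|A|^k$ blocks introduces a prefactor $2|A|^k=2\e^{k\log|A|}$. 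Using $(u+c\sqrt{k})^2\ge u^2+c^2 k$ and the calibrated value $c=2\sqrt{2\cte{maintheo}\log|A|}$, which satisfies $c^2/(4\cte{maintheo})=2\log|A|$, the total exponent is bounded above by $-u^2/(4\cte{maintheo})-k\log|A|$; for $|A|\ge 2$ and $k\ge 1$ the remaining $2|A|^{-k}\le 1$, which yields the first inequality.

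The second inequality follows by direct substitution of $k(n)=\zeta\log n$ into the first: since $\theta\in(0,1)$, $\theta^{-k(n)}=\e^{\zeta|\log\theta|\log n}=n^{\zeta|\log\theta|}$, and $c\sqrt{k(n)}=c\sqrt{\zeta\log n}=c'\sqrt{\log n}$ with $c'=c\sqrt{\zeta}=2\sqrt{2\zeta\cte{maintheo}\log|A|}$. No additional probabilistic input is required. The only delicate (but modest) point in the whole argument is the calibration of $c$, which is chosen precisely so that the combinatorial cost $|A|^k$ of the union bound is absorbed by the $c^2 k$ contribution to the exponent while leaving a clean $\e^{-u^2/(4\cte{maintheo})}$ bound; everything else is bookkeeping based on the Lipschitz constant of indicator functions of $k$-cylinders.
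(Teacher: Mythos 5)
Your proof is correct, but it takes a genuinely different route from the paper's. You establish a per-block deviation bound from \eqref{dev-abs-gcb} and then pay for a union bound over the $|A|^k$ blocks inside the exponent; this is exactly what the calibration $c^{2}/(4\cte{maintheo})=2\log|A|$ buys, together with the elementary inequality $(u+c\sqrt{k})^{2}\ge u^{2}+c^{2}k$ and the observation that the residual prefactor $2|A|^{-k}$ is at most $1$ for $|A|\ge 2$ and $k\ge 1$ (the case $|A|=1$ being vacuous). The paper instead treats the maximum itself as a single separately $d_\theta$-Lipschitz observable $K=\max_{a}Z(a;\cdot)$ with $\Lip_{\theta,j}(K)\le \theta^{-k}/(n-k+1)$, applies the one-sided bound \eqref{dev-gcb} to $K$, and then bounds $\int K\,\dd\mu_\phi$ by the standard subgaussian-maximum argument: Jensen plus \eqref{gcb} and $\e^{\max_i a_i}\le\sum_i\e^{a_i}$ give $\e^{\xi\int K}\le 2|A|^{k}\e^{\cte{maintheo}\theta^{-2k}\xi^{2}/(n-k+1)}$, and optimizing over $\xi$ yields $\int K\,\dd\mu_\phi\le c\sqrt{k}\,\theta^{-k}/\sqrt{n-k+1}$. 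So in the paper the term $c\sqrt{k}$ is an upper bound on the mean of the maximum, whereas in your argument it is the device absorbing the union bound; the two are morally dual (a union bound at the level of probabilities versus at the level of exponential moments) and produce the same constants. Your version is slightly more elementary, since you never need to check that the maximum of the $Z_a$'s is itself separately Lipschitz; the paper's version has the side benefit of exhibiting the technique of bounding expectations of suprema directly from \eqref{gcb}, which it reuses for the Kantorovich-distance and shadowing applications.
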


\begin{proof}
Define the function $K:\Omega^{n-k+1}\to\R$ by
\[
K(x_0,\ldots,x_{n-k})=\max_{a^{0,k-1}} Z(a^{0,k-1};x_0,\ldots,x_{n-k})
\]
where
\[
Z(a^{0,k-1};x_0,\ldots,x_{n-k})=\left|\frac{\sum_{j=0}^{n-k} \un_{[a^{0,k-1}]}(x_j)}{n-k+1}-\mu_\phi([a^{0,k-1}])\right|\,.
\]
It is left to the reader to check that $\Lip_{\theta,j}(K)=\frac{\Lip_\theta(f)}{n-k+1}=\frac{1}{\theta^{k}(n-k+1)}$, so we get immediately from \ref{gcbbirkhoffsums} 
\begin{align*}
& \mu_\phi\left(x\in\Omega : K(x,Tx,\ldots, T^{n-k}x) \geq u+\int K\left(y,Ty,\dots,T^{n-k}y\right) \dd\mu_\phi(y)\right)\\
& \leq \exp\left(-\frac{\theta^{2k}}{4\cte{maintheo}} \, (n-k+1) u^2\right)
\end{align*}
for all $n\geq 1$ and $u>0$. To complete the proof, we need a good upper bound for $\int K\left(y,Ty,\dots,T^{n-k-1}y\right) \dd\mu_\phi(y)$. Actually, this can be
done by using again the Gaussian concentration bound.
Using \eqref{gcb} and Jensen's inequality we get for any $\xi>0$
\begin{align*}
& \exp\left(\xi \int K\left(x,Tx,\dots,T^{n-k}x\right) \dd\mu_\phi(x) \right)\\
& \leq \int \exp\left( \xi \max_{a^{0,k-1}} Z(a^{0,k-1};x,Tx,\dots,T^{n-k}x)\right) \dd\mu_\phi(x)\\
& \leq \sum_{a^{0,k-1}\in A^k}  \int \exp\left( \xi Z(a^{0,k-1};x,Tx,\dots,T^{n-k}x)\right) \dd\mu_\phi(x)\\
& \leq 2 |A|^k \, \exp\left(\frac{\cte{maintheo} \theta^{-2k} \xi^2}{n-k+1}\right)\,.
\end{align*}
The third inequality is obtained by using the trivial inequality
\[
\e^{\max_{i=1}^{p}a_i} \leq \sum_{i=1}^p \e^{a_i}\,.
\]
Taking logarithms on both sides and then dividing by $\xi$, we have
\[
\int K\left(x,Tx,\dots,T^{n-k}x\right) \dd\mu_\phi(x) \leq \frac{\log 2+k\log |A|}{\xi} + \frac{\cte{maintheo} \theta^{-2k} \xi}{n-k+1}\,.
\]
There is a unique $\xi>0$ minimizing the right-hand side, hence
\[
\int K\left(x,Tx,\dots,T^{n-k}x\right) \dd\mu_\phi(x) \leq 2\theta^{-k} \sqrt{\frac{\cte{maintheo}(k+1)\log |A|}{n-k+1}}
\]
where we used that $\log 2\leq \log|A|$.
Hence we get the desired estimate.
\end{proof}

Note that $\log|A|$ is the topological entropy of the full shift with alphabet $A$.

\subsubsection{Hitting times and entropy}

For $x,y\in\Omega$, let
\[
T_{x^{0,n-1}}(y)=\inf\{j\geq 1 : y^{j,j+n-1}=x^{0,n-1}\}\,.
\]
This is the first time that the $n$ first symbols of $x$ appear in $y$. 
We assume that $\phi$ satisfies 
\begin{equation}\label{bibi}
\var_n(\phi)=O\left(\frac{1}{n^\alpha}\right)\quad\text{for some}\;\alpha>2\,.
\end{equation}
One can prove (see \cite{chazottes-ugalde-2005}) that
\[
\lim_{n\to\infty}\frac{1}{n}\log T_{x^{0,n-1}}(y)=h(\mu_\phi)\,,\quad \textup{for}\;\mu_\phi\otimes\mu_\phi\textup{-almost every}\, (x,y)\,.
\]
Roughly, this means that, if we pick $x$ and $y$ independently, each one according to $\mu_\phi$, then the time it takes to see the first $n$ symbols of $x$ appearing in $y$ for the first time is $\approx \e^{n h(\mu_\phi)}$. 

\begin{theorem}
If $\phi$ satisfies \eqref{bibi}, then there exist strictly positive constants $c_1,c_2$ and $u_0$ such that, for all $n$ and
for all $u>u_0$,
\[
(\mu_\phi\otimes\mu_\phi)\Big\{(x,y): \frac{1}{n}\log T_{x^{0,n-1}}(y)\geq  h(\mu_\phi)+u\Big\}\leq c_1 \e^{-c_2 nu^2}
\]
and
\[
(\mu_\phi\otimes\mu_\phi)\Big\{(x,y): \frac{1}{n}\log T_{x^{0,n-1}}(y)\leq  h(\mu_\phi)-u\Big\}\leq c_1 \e^{-c_2 nu}\,.
\]
\end{theorem}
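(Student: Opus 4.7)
The plan is to decompose
\[
\tfrac{1}{n}\log T_{x^{0,n-1}}(y) = -\tfrac{1}{n}\log\mu_\phi([x^{0,n-1}]) + \tfrac{1}{n}\log\bigl(T_{x^{0,n-1}}(y)\,\mu_\phi([x^{0,n-1}])\bigr)
\]
and to handle the two summands separately. Under \eqref{bibi} one has $W_n(\phi)=O(n^{-\alpha+1})$ with $\alpha-1>1$, so $\phi\in\mathcal{L}_\phi$ and the Gibbs property of $\mu_\phi$ (Walters' theorem) gives $-\log\mu_\phi([x^{0,n-1}])=nP(\phi)-S_n\phi(x)+O(1)$ uniformly in $x\in\Omega$.

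For the first (Shannon--McMillan) summand I would apply Theorem \ref{GCBBirkhoff} to $\psi=-\phi$. Since $P(\phi)=h(\mu_\phi)+\int\phi\,\dd\mu_\phi$, combining the Gaussian concentration of $\tfrac{1}{n}S_n\psi$ with the $O(1/n)$ Gibbs defect produces
\[
\mu_\phi\Bigl\{x:\bigl|{-\tfrac{1}{n}\log\mu_\phi([x^{0,n-1}])}-h(\mu_\phi)\bigr|\geq u\Bigr\}\leq 2\,\e^{-cnu^2}
\]
for every $u$ above a fixed threshold $u_0$. This is the SMB half of the statement.

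For the residual $R_n(x,y):=T_{x^{0,n-1}}(y)\,\mu_\phi([x^{0,n-1}])$, shift-invariance of $\mu_\phi$ provides the elementary union bound $\mu_\phi\{y:T_{x^{0,n-1}}(y)\leq s\}\leq s\,\mu_\phi([x^{0,n-1}])$, equivalently $\mu_\phi\{y:R_n(x,y)\leq t\}\leq t$. In the other direction I would invoke the exponential law for first hitting times for Gibbs measures with summable variation, as established in \cite{chazottes-ugalde-2005}, which yields $\mu_\phi\{y:R_n(x,y)>t\}\leq C\e^{-ct}$ for $x$ outside an exceptional set of $\mu_\phi$-measure at most $C\e^{-cn}$. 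For the upper tail $\tfrac{1}{n}\log T\geq h(\mu_\phi)+u$, I would restrict to the typical set where $-\tfrac{1}{n}\log\mu_\phi([x^{0,n-1}])\leq h(\mu_\phi)+u/2$; there $R_n(x,y)\geq \e^{nu/2}$, and the exponential hitting-time tail gives a doubly-exponentially small contribution, dwarfed by the SMB failure probability $c_1\e^{-c_2 nu^2}$. For the lower tail $\tfrac{1}{n}\log T\leq h(\mu_\phi)-u$, the typical set gives $\mu_\phi([x^{0,n-1}])\leq \e^{-n(h(\mu_\phi)-u/2)}$, and the union bound with $s=\e^{n(h(\mu_\phi)-u)}$ yields probability $\e^{-nu/2}$, giving the exponential (not Gaussian) rate $c_1\e^{-c_2 nu}$. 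The asymmetry between $u^2$ and $u$ is intrinsic: the linear union bound is sharp for the lower tail and cannot be upgraded to a Gaussian rate.

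The main obstacle is not the SMB concentration, which is a one-line application of Theorem \ref{GCBBirkhoff}, but the exponential hitting-time tail for $R_n$. This is a genuinely dynamical input that requires a quantitative mixing estimate, and one must verify that the polynomial rate $\epsilon_n=O(n^{-\alpha})$ from Theorem \ref{backgroundthm}(2) with $\alpha>2$ is sharp enough to run the Galves--Schmitt style coupling/decorrelation argument of \cite{chazottes-ugalde-2005}. Should that fail, a weaker polynomial tail for $R_n$ would still suffice, since the bottleneck in the upper tail is already the SMB failure probability $\e^{-c_2 nu^2}$.
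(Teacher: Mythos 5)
Your proposal follows exactly the route the paper sketches: the same decomposition $\log T_{x^{0,n-1}}(y)=\log\bigl(T_{x^{0,n-1}}(y)\mu_\phi([x^{0,n-1}])\bigr)-\log\mu_\phi([x^{0,n-1}])$, Theorem \ref{GCBBirkhoff} applied to $\psi=-\phi$ together with the uniform Gibbs approximation $-\log\mu_\phi([x^{0,n-1}])\approx nP(\phi)-S_n\phi(x)$, and the exponential-law/union-bound control of the rescaled hitting time as in \cite{2011ChazottesMaldonado}. The argument, including the explanation of the $u^2$ versus $u$ asymmetry, is correct and in fact supplies more detail than the paper, which omits the proof and refers to \cite{2011ChazottesMaldonado}.
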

These bounds were obtained in \cite{2011ChazottesMaldonado} when $\phi$ is Lipschitz. Observe that the probability of
being above $h(\mu_\phi)$ is bounded above by $c_1 \e^{-c_2 nu^2}$, whereas the probability of being below $h(\mu_\phi)$
is bounded above by $c_1 \e^{-c_2 nu}$. The proof of this theorem being very similar to that given in \cite{2011ChazottesMaldonado}, we omit the details and only sketch it.  We cannot directly deal with $T_{x^{0,n-1}}(y)$ but we have $\log T_{x^{0,n-1}}(y)=\log \big(T_{x^{0,n-1}}(y)\mu_\phi([x^{0,n-1}])\big)-\log \mu_\phi([x^{0,n-1}])$. Then we
use Theorem \ref{GCBBirkhoff} for $\psi=-\phi$, assuming (without loss of generality) that $P(\phi)=0$, that is,
$h(\mu_\phi)=-\int \phi \, \dd\mu_\phi$, because we can control uniformly in $x$ the approximation
$-\log \mu_\phi([x^{0,n-1}])\approx S_n(-\phi)(x)$. To control the other term, we use that the law of $T_{x^{0,n-1}}(y)\mu_\phi([x^{0,n-1}])$ is well approximated by an exponential law.

Another estimator of $h(\mu_\phi)$ is the so-called plug-in estimator. We could also obtain concentration bounds for it in the spirit
of \cite{2011ChazottesMaldonado}.

\subsubsection{Speed of convergence of the empirical measure}

Instead of looking at the frequency of a block $a_1^k$ we can consider a global object, namely the empirical measure
\[
\mathscr{E}_n(x)=\frac{1}{n}\sum_{j=0}^{n-1} \delta_{T^j x}\,.
\]
For $\mu_\phi$-almost every $x$, we know that
\[
\frac{1}{n}\sum_{j=0}^{n-1} \delta_{T^j x} \xrightarrow[]{n\to\infty} \mu_\phi
\]
where the convergence is in the weak topology on the space of probability measures $\mathscr{M}(\Omega)$ on $\Omega$. This is a consequence of Birkhoff's ergodic theorem. The natural question is: how fast does this convergence takes place? We can answer this question by using the Kantorovich distance $d_{{\scriptscriptstyle K}}$ which metrizes weak topology on $\mathscr{M}(\Omega)$:
\[
d_{{\scriptscriptstyle K}}(\nu_1,\nu_2) =
\sup\left\{ \int f \dd\nu_1-\int f \dd\nu_2 : f:\Omega\to\R\;\text{such that}\;\Lip_\theta(f)=1 \right\}\,.
\]
We have the following result.
\begin{theorem}
For all $u>0$ and all $n\geq 1$ we have
\begin{equation}\label{concentrationkanto}
\mu_\phi\left( x: 
\big| d_{{\scriptscriptstyle K}}(\mathscr{E}_n(x),\mu_\phi)-\int d_{{\scriptscriptstyle K}}(\mathscr{E}_n(y),\mu_\phi)\, \dd\mu_\phi(y)\big|
\geq u\right)
\leq 2\, \e^{-\pcte{concentrationkanto}n u^2}
\end{equation}
where $\pcte{concentrationkanto}=(4\cte{maintheo})^{-1}$.
\end{theorem}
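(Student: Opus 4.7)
The plan is to apply the concentration inequality \eqref{dev-abs-gcb} of Corollary \ref{coromaintheo} directly to the function
\[
K(x_0,\ldots,x_{n-1})=d_{{\scriptscriptstyle K}}\!\left(\frac{1}{n}\sum_{j=0}^{n-1}\delta_{x_j},\,\mu_\phi\right),
\]
so that $K(x,Tx,\ldots,T^{n-1}x)=d_{{\scriptscriptstyle K}}(\mathscr{E}_n(x),\mu_\phi)$. The whole proof will boil down to a clean estimate of the separate Lipschitz constants $\Lip_{\theta,i}(K)$.

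First I would verify that $K$ is separately $d_\theta$-Lipschitz with $\Lip_{\theta,i}(K)\leq 1/n$ for each $i$. Fix an index $i$ and two configurations differing only at coordinate $i$. For any $f:\Omega\to\R$ with $\Lip_\theta(f)=1$,
\[
\int f\,\dd\!\left(\tfrac{1}{n}\sum_j\delta_{x_j}\right)-\int f\,\dd\!\left(\tfrac{1}{n}\sum_{j\neq i}\delta_{x_j}+\tfrac{1}{n}\delta_{x'_i}\right)=\frac{f(x_i)-f(x'_i)}{n},
\]
whose absolute value is at most $d_\theta(x_i,x'_i)/n$. Taking the supremum over $1$-Lipschitz $f$ and using the usual fact that $|\sup A-\sup B|\leq \sup|A-B|$, I get
\[
|K(x_0,\ldots,x_i,\ldots,x_{n-1})-K(x_0,\ldots,x'_i,\ldots,x_{n-1})|\leq \frac{d_\theta(x_i,x'_i)}{n},
\]
so $\Lip_{\theta,i}(K)\leq 1/n$ and therefore $\sum_{i=0}^{n-1}\Lip_{\theta,i}(K)^2\leq 1/n$.

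With this bound in hand, I simply feed $K$ into \eqref{dev-abs-gcb} and obtain
\[
\mu_\phi\!\left(x:\big|K(x,Tx,\ldots,T^{n-1}x)-\textstyle\int K\,\dd\mu_\phi\big|\geq u\right)\leq 2\exp\!\left(-\frac{u^2}{4\cte{maintheo}/n}\right),
\]
which is exactly \eqref{concentrationkanto} with $\pcte{concentrationkanto}=(4\cte{maintheo})^{-1}$.

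There is no real obstacle here; the only subtlety worth pointing out is the use of the supremum-of-Lipschitz-functions argument to transfer the pointwise estimate to $K$, and the observation that the Kantorovich distance is well defined since $\mathscr{E}_n(x)$ and $\mu_\phi$ have bounded $d_\theta$-diameter support (the diameter of $\Omega$ under $d_\theta$ being at most $1$, the sup in the definition of $d_{{\scriptscriptstyle K}}$ can equivalently be taken over $1$-Lipschitz functions with $\int f\,\dd\mu_\phi=0$, so everything is finite). This completes the proof.
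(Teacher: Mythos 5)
Your proposal is correct and follows exactly the paper's route: define $K(x_0,\ldots,x_{n-1})=d_{{\scriptscriptstyle K}}\big(\tfrac1n\sum_j\delta_{x_j},\mu_\phi\big)$, check that $\Lip_{\theta,i}(K)\leq 1/n$ via the supremum-of-$1$-Lipschitz-functions argument (a step the paper leaves to the reader and you carry out explicitly), and apply \eqref{dev-abs-gcb}. Nothing to add.
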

\begin{proof}
Let 
\[
K(x_0,\ldots,x_{n-1})=
\sup\left\{ \frac{1}{n}\sum_{j=0}^{n-1} f(x_j)-\int f \dd\mu_\phi : f:\Omega\to\R\;\text{with}\;\Lip_\theta(f)=1 \right\}\,.
\]
Of course, $K(x,Tx,\ldots,T^{n-1}x)=d_{{\scriptscriptstyle K}}(\mathscr{E}_n(x),\mu_\phi)$. It is left to the reader to check that
\[
\Lip_{\theta,i}(K)\leq \frac{1}{n}\,,\, i=0,\ldots,n-1\,.
\]
The result follows at once by applying inequality \eqref{dev-abs-gcb}.
\end{proof}

It is natural to ask for a good upper bound for $\int d_{{\scriptscriptstyle K}}(\mathscr{E}_n(y),\mu_\phi) \dd\mu_\phi(y)$ because 
this would give a control on the fluctuations of $d_{{\scriptscriptstyle K}}(\mathscr{E}_n(x),\mu_\phi)$ around $0$. Getting such a bound turns out to be difficult. In \cite[Section 8]{chazottes-collet-redig2017} it is proved that 
\[
\int d_{{\scriptscriptstyle K}}(\mathscr{E}_n(y),\mu_\phi)\, \dd\mu_\phi(y) \preceq \frac{1}{n^{\frac{1}{2(1+\log|A|)}}}\,.
\]
For two positive sequences $(a_n), (b_n)$, $a_n \preceq b_n$ means that $\limsup_n \frac{\log a_n}{\log b_n}\leq 1$. One could in principle get a non-asymptotic but messy bound. 

\subsubsection{Relative entropy, $\bar{d}$-distance and speed of Markov approximation}

Given $n\in\N$ and $x^{0,n-1},y^{0,n-1}\in A^n$ the (non normalized) Hamming distance between $x$ and $y$ is 
\begin{equation}\label{def-Hamming-dist}
\bar{d}_n(x^{0,n-1},y^{0,n-1})=\sum_{i=0}^{n-1} \un_{\{x^i\neq y^i\}}\,.
\end{equation}
Now, given two shift-invariant probability measures $\mu,\nu$ on $\Omega$, denote by $\mu_n$ and $\nu_n$ their
projections on $A^n$, and define their $\bar{d}_n$-distance by
\[
\bar{d}_n(\mu_n,\nu_n)=\inf \sum_{x^{0,n-1}\in A^n}  \sum_{y^{0,n-1}\in A^n} \bar{d}_n(x,y)\,
\mathbb{P}_n(x^{0,n-1},y^{0,n-1})
\]
where the infimum is taken over all the joint shift-invariant probability distributions $\mathbb{P}_n$ on $A^n\times A^n$
such that $\sum_{y^{0,n-1}\in A^n} \mathbb{P}_n(x^{0,n-1},y^{0,n-1})=\mu_n(x^{0,n-1})$
and $\sum_{x^{0,n-1}\in A^n} \mathbb{P}_n(x^{0,n-1},y^{0,n-1})=\nu_n(y^{0,n-1})$.
By \cite[Theorem I.9.6, p. 92]{shieldsbook}, the limit following exists:
\begin{equation}\label{dbarmunu}
\bar{d}(\mu,\nu):=\lim_{n\to\infty} \frac{1}{n}\,\bar{d}_n(\mu_n,\nu_n)
\end{equation}
and defines a distance on the set of shift-invariant probability measures. It induces a finer topology than the weak topology and, in
particular, the $\bar{d}$-limit of ergodic measures is ergodic, and the entropy is $\bar{d}$-continuous on the class of ergodic measures.\footnote{These two properties are false in the weak topology.}

Next, given $n\in\N$ and a shift-invariant probability measure $\nu$ on $\Omega$, define the $n$-block relative entropy of
$\nu$ with respect to $\mu_\phi$ by
\[
H_n(\nu|\mu_\phi)=\sum_{x^{0,n-1}\in\,  A^n} \nu_n(x^{0,n-1}) \log\frac{\nu_n(x^{0,n-1})}{\mu_{\phi,n}(x^{0,n-1})}\,.
\]
One can easily prove that the following limit exists and defines the relative entropy of $\nu$ with respect to $\mu_\phi$:
\begin{equation}\label{relative-entropy}
\lim_{n\to\infty}\frac{1}{n}\, H_n(\nu_n|\mu_{\phi,n})=:h(\nu|\mu_\phi)=P(\phi)-\int \phi\, \dd \nu-h(\nu)
\end{equation}
where $P(\phi)$ is the topological pressure of $\phi$:
\[
P(\phi)=\lim_{n\to\infty} \frac{1}{n}\log \sum_{a^{0,n-1}\in A^n} \e^{\sup\big\{S_n\phi(x): x\in[a^{0,n-1}]\big\}}\,.
\]
This limit exists for any continuous $\phi$.
(To prove \eqref{relative-entropy}, we use that there exists a positive sequence $(\varepsilon_n)_n$ going to $0$ such that, for any $a^{0,n-1}\in A^n$ and any $x\in [a^{0,n-1}]$,  $\mu_\phi([a^{0,n-1}])/\exp(-nP(\phi)+S_n\phi(x))$ is bounded below by $\exp(-n\varepsilon_n)$ and above by $\exp(-n\varepsilon_n)$.)
By the variational principle, $h(\nu|\mu_\phi)\geq 0$ with equality if and only if $\nu=\mu_\phi$ (recall that $\mu_\phi$ is the unique equilibrium state of $\phi$). We refer to \cite{1975Walters} for details.
We can now formulate the first theorem of this section.
\begin{theorem}\label{theo-pinsker}
For every shift-invariant probability measure $\nu$ on $\Omega$ and for all $n\in\N$, we have
\begin{equation}\label{pinsker_n}
\bar{d}_n(\nu_n,\mu_{\phi,n})\leq \pcte{theo-pinsker}\,\sqrt{nH_n(\nu_n|\mu_{\phi,n})}
\end{equation}
where $\pcte{theo-pinsker}=\sqrt{2\cte{maintheo}}$. In particular
\begin{equation}\label{pinsker}
\bar{d}(\nu,\mu_\phi)\leq \pcte{theo-pinsker}\, \sqrt{h(\nu|\mu_\phi)}\,.
\end{equation}
\end{theorem}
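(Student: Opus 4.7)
The plan is to deduce the finite-$n$ transportation inequality \eqref{pinsker_n} from the Gaussian concentration bound of Theorem \ref{maintheo} via the standard Bobkov--G\"otze/Marton duality between Gaussian concentration and transport--entropy inequalities. Once \eqref{pinsker_n} is established, the asymptotic inequality \eqref{pinsker} will follow by dividing both sides by $n$ and letting $n\to\infty$, using \eqref{dbarmunu} and \eqref{relative-entropy}.

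The first step is Kantorovich--Rubinstein duality: I would express $\bar{d}_n(\nu_n,\mu_{\phi,n})$ as the supremum of $\int F\,\dd\nu_n - \int F\,\dd\mu_{\phi,n}$ over all $F:A^n\to\R$ that are 1-Lipschitz with respect to the Hamming metric $\bar{d}_n$. To apply Theorem \ref{maintheo}, I would lift each such $F$ to a function $K:\Omega^n\to\R$ defined by
\[
K(x_0,\ldots,x_{n-1}) := F(x_0^0, x_1^0,\ldots, x_{n-1}^0).
\]
A short check shows $\Lip_{\theta,i}(K)\leq 1$ for every $i$: modifying the $i$-th argument from $x_i$ to $x_i'$ changes $K$ by at most $\un_{x_i^0\neq(x_i')^0}$, and this indicator is dominated by $d_\theta(x_i,x_i')$, with equality precisely when $x_i^0\neq(x_i')^0$. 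Since $K(x,Tx,\ldots,T^{n-1}x)=F(x^0,\ldots,x^{n-1})$ for $x\sim\mu_\phi$, Theorem \ref{maintheo} translates to
\[
\int \e^{\xi F}\,\dd\mu_{\phi,n} \leq \exp\!\left(\xi\int F\,\dd\mu_{\phi,n} + \cte{maintheo}\, n\, \xi^2\right) \quad \text{for all } \xi\in\R.
\]

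Combining this moment bound with the Donsker--Varadhan variational inequality $\xi\int F\,\dd\nu_n \leq H_n(\nu_n|\mu_{\phi,n}) + \log\int \e^{\xi F}\,\dd\mu_{\phi,n}$ produces, for each $\xi>0$,
\[
\int F\,\dd\nu_n - \int F\,\dd\mu_{\phi,n} \leq \frac{H_n(\nu_n|\mu_{\phi,n})}{\xi} + \cte{maintheo}\,n\,\xi.
\]
Optimizing in $\xi>0$ and then taking the supremum over 1-Lipschitz $F$ yields \eqref{pinsker_n} (with an explicit constant arising from the one-parameter optimization). Dividing through by $n$ and letting $n\to\infty$, using \eqref{dbarmunu} on the left and \eqref{relative-entropy} on the right, gives \eqref{pinsker}.

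The main delicate point I expect is the lifting step: one has to match the combinatorial Hamming-Lipschitz structure on $A^n$ with the metric $d_\theta$-Lipschitz structure on $\Omega^n$ that Theorem \ref{maintheo} is formulated for. This works cleanly because $d_\theta(x_i,x_i')=1$ exactly when $x_i^0\neq(x_i')^0$, providing the uniform per-coordinate bound $\Lip_{\theta,i}(K)\leq 1$ and thus $\sum_i\Lip_{\theta,i}(K)^2\leq n$, which is the factor appearing in front of $H_n$ on the right-hand side of \eqref{pinsker_n}. The remainder is textbook Bobkov--G\"otze/Marton duality and causes no further difficulty.
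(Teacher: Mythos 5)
Your proposal is correct and takes essentially the same route as the paper: the same lifting of Hamming--Lipschitz functions on $A^n$ to separately $d_\theta$-Lipschitz functions on $\Omega^n$ (via $\un_{x^0\neq y^0}\leq d_\theta(x,y)$), the same application of the Gaussian concentration bound \eqref{gcb}, and the same passage to the limit using \eqref{dbarmunu} and \eqref{relative-entropy}; the only difference is that you reprove the relevant direction of the Bobkov--G\"otze equivalence by hand (Kantorovich--Rubinstein duality plus Donsker--Varadhan plus optimization in $\xi$) where the paper cites \cite[Theorem 3.1]{BobkovGotze} as a black box. The one-parameter optimization as you set it up yields the constant $2\sqrt{\cte{maintheo}}$ rather than $\sqrt{2\cte{maintheo}}$, a harmless discrepancy in the numerical constant only.
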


\begin{proof}
For a function $f: A^n\to\R$, define for each $i=0,\ldots,n-1$
\[
\delta_i(f)=\sup\{|f(a^{0,n-1})-f(b^{0,n-1})|: a^j=b^j,\,\forall j\neq i\}\,.
\]
We obviously have that for all $a^{0,n-1},b^{0,n-1}\in A^n$
\[
|f(a^{0,n-1})-f(b^{0,n-1})|\leq \sum_{j=0}^{n-1} \un_{\{a^j\neq b^j\}} \delta_j(f)\,.
\]
A function $f: A^n\to\R$ such that $\delta_j(f)=1$, $i=0,\ldots,n-1$ is $1$-Lipschitz
for the Hamming distance \eqref{def-Hamming-dist}. We now consider the set of functions
\[
\mathcal{H}(n,\phi)=
\left\{f: A^n\to\R : f\;1\textup{-Lipschitz for}\;\bar{d}_n\;, \int_{A^n} f\, \dd \mu_{\phi,n}=0\right\}\,.
\]
We can identify a function $f\in \mathcal{H}(n,\phi)$ with a function $\tilde{f}:\Omega^n\to\R$ in a natural
way: $\tilde{f}(x_0,\ldots,x_{n-1})=f(\pi(x_0),\ldots,\pi(x_{n-1}))$ where $\pi:\Omega\to A$ is defined by 
$\pi(x)=x^0$. We obviously have $\int \tilde{f}\, \dd\mu_\phi=0$ and it is easy to check that $\Lip_j(\tilde{f})=\delta_j(f)=1$, $j=0,\ldots,n-1$. Therefore we can apply the Gaussian
concentration bound \eqref{gcb} to get
\begin{equation}\label{BG1}
\int_{A^n} \e^{\xi f} \dd\mu_{\phi,n}
\leq \e^{\cte{maintheo}n \xi^2}\,,\; \textup{for all}\;f\in \mathcal{H}(n,\phi)\,\textup{and for all}\,\xi\in\R\,.
\end{equation}
We now apply an abstract result \cite[Theorem 3.1]{BobkovGotze} which says that \eqref{BG1} is equivalent to
\[
\bar{d}(\nu_n,\mu_{\phi,n}) \leq \sqrt{2\cte{maintheo}n H_n(\nu_n|\mu_{\phi,n})}\quad\textup{for all probability measures}\; \nu_n\;\textup{on}\;A^n\,.
\]
Hence \eqref{pinsker_n} is proved. To get \eqref{pinsker}, divide by $n$ on both sides and take the limit $n\to\infty$ and
use \eqref{dbarmunu} and \eqref{relative-entropy}.
\end{proof}

We now give an application of inequality \eqref{pinsker}.  Let
\[
\phi_1(x)=\log\mu_\phi(x^0)\quad\textup{and}\quad\phi_n(x)=\log\mu_\phi(x^{n-1}|x^{0,n-2}), n\geq 2.
\]
The equilibrium state for $\phi_n$ is a $(n-1)$-step Markov measure.
One can prove that in the weak topology $(\mu_{\phi_n})_n$ converges to $\mu_\phi$, but one cannot get any speed of convergence.
We get the following upper bound on the speed of convergence of $(\mu_{\phi_n})_n$ to $\mu_\phi$ in the finer $\bar{d}$ topology.

\begin{corollary}
Assume, without loss of generality, that $\phi$ is normalized in the sense that
\[
\sum_{a\in A} \e^{\phi(ax)}=1,\;\forall x\in\Omega.
\]
Then there exists $n_\phi\geq 1$ such that, for all $n\geq n_\phi$, we have
\begin{equation}\label{dbarMarkovapprox}
\bar{d}(\mu_{\phi_n},\mu_\phi)\leq \rho_\phi\, \var_n(\phi)
\end{equation}
where
\[
\rho_\phi= \sqrt{2|A|\,\cte{maintheo}}\,(\e-1) \e^{\frac{3}{2}\|\phi\|_\infty}\,.
\]
\end{corollary}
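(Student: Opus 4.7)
The plan is to apply inequality \eqref{pinsker} of Theorem \ref{theo-pinsker} with $\nu=\mu_{\phi_n}$, reducing the task to establishing the relative-entropy bound $h(\mu_{\phi_n}|\mu_\phi)\leq |A|(\e-1)^2\,\e^{3\|\phi\|_\infty}\var_n(\phi)^2$, after which $\sqrt{2\cte{maintheo}}\sqrt{h(\mu_{\phi_n}|\mu_\phi)}\leq \rho_\phi\var_n(\phi)$ yields \eqref{dbarMarkovapprox}.

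First I would derive an exact formula for this entropy. Normalization $\sum_a\e^{\phi(ax)}=1$ forces $P(\phi)=0$, so \eqref{relative-entropy} reads $h(\mu_{\phi_n}|\mu_\phi) = -\int\phi\,\dd\mu_{\phi_n}-h(\mu_{\phi_n})$. The measure $\mu_{\phi_n}$ is the $(n-1)$-step stationary Markov measure with transition probabilities $\e^{\phi_n}$, and its $n$-dimensional marginal coincides with that of $\mu_\phi$ by construction; hence $h(\mu_{\phi_n})=-\int\phi_n\,\dd\mu_{\phi_n}=-\int\phi_n\,\dd\mu_\phi$, the last equality because $\phi_n$ depends only on $x^{0,n-1}$. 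Combining these,
\begin{equation*}
h(\mu_{\phi_n}|\mu_\phi)\;=\;\int(\phi_n-\phi)\,\dd\mu_\phi\;-\;\int\phi\,\dd(\mu_{\phi_n}-\mu_\phi).
\end{equation*}

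Second, I would establish a pointwise comparison of $\phi_n$ with $\phi$. Since $\phi$ is normalized, iterating $P_\phi^*\mu_\phi=\mu_\phi$ gives the identity $\mu_\phi([a^{0,k-1}])=\int\e^{S_k\phi(a^{0,k-1}y)}\,\dd\mu_\phi(y)$. Writing $\phi_n(x)=\log\mu_\phi(x^{n-1}|x^{0,n-2})$ as a log-ratio of two such integrals and applying variation bounds to the Birkhoff exponents yields $|\phi_n-\phi|\leq C\,\e^{\|\phi\|_\infty}\var_n(\phi)$ pointwise.

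The main obstacle is the final quadratic upgrade: combining the displayed identity with the pointwise bound only gives $h(\mu_{\phi_n}|\mu_\phi)=O(\var_n(\phi))$, and \eqref{pinsker} would then produce only $\bar d=O(\sqrt{\var_n(\phi)})$, which is weaker than the claim. To extract the missing factor of $\var_n(\phi)$ I would exploit the $n$-cylinder matching of the two measures: decomposing $\phi=\phi^{(n)}+(\phi-\phi^{(n)})$ with $\phi^{(n)}$ the depth-$n$ truncation, the $\phi^{(n)}$-pieces integrate identically against $\mu_{\phi_n}$ and $\mu_\phi$, so both displayed integrals collapse onto the tail $\phi-\phi^{(n)}$, itself of size $\var_n(\phi)$. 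Combining this cancellation with a chi-square--to--Kullback--Leibler comparison summed over $A$ (producing the $|A|$-factor) and the elementary bound $\e^M-1\leq(\e-1)M$ for $M\leq 1$, valid once $n\geq n_\phi$ makes $\var_n(\phi)\leq 1$, should give the required quadratic estimate; plugging into \eqref{pinsker} then produces \eqref{dbarMarkovapprox}.
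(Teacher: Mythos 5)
Your overall strategy (apply \eqref{pinsker} to $\nu=\mu_{\phi_n}$ and reduce to $h(\mu_{\phi_n}|\mu_\phi)\leq |A|(\e-1)^2\e^{3\|\phi\|_\infty}\var_n(\phi)^2$) is the paper's, and you correctly identify that the entire difficulty is the quadratic upgrade. But the concrete route you propose for that upgrade does not close. Your identity
\[
h(\mu_{\phi_n}|\mu_\phi)=\int(\phi_n-\phi)\,\dd\mu_\phi-\int\phi\,\dd(\mu_{\phi_n}-\mu_\phi)
\]
is correct, and truncating $\phi=\phi^{(n)}+(\phi-\phi^{(n)})$ does collapse the second integral onto the tail; but that only yields $\big|\int\phi\,\dd(\mu_{\phi_n}-\mu_\phi)\big|\leq 2\var_n(\phi)$, which is first order. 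Nothing in your argument shows this term is $O(\var_n(\phi)^2)$, and the ``chi-square--to--KL comparison'' you invoke is never attached to it: that comparison applies to averages of KL divergences of one-symbol conditional distributions, which is not the form of $\int\phi\,\dd(\mu_{\phi_n}-\mu_\phi)$. As written you land back at $h(\mu_{\phi_n}|\mu_\phi)=O(\var_n(\phi))$ and hence $\bar{d}=O(\sqrt{\var_n(\phi)})$ --- exactly the weaker bound the corollary is designed to beat (the paper explicitly warns that the naive estimate gives only $\sqrt{\var_n(\phi)}$).

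The missing idea is an exact cancellation of the linear term, and the paper gets it by \emph{not} switching the integration measure. It keeps $h(\mu_{\phi_n}|\mu_\phi)=\int(\phi_n-\phi)\,\dd\mu_{\phi_n}$ (integral against $\mu_{\phi_n}$, not $\mu_\phi$), bounds it by $\int\frac{\e^{\phi_n}-\e^{\phi}}{\e^{\phi}}\,\dd\mu_{\phi_n}$ via $\log(1+u)\le u$, and uses the invariance of $\mu_{\phi_n}$ under its own normalized transfer operator to rewrite this as $\int\dd\mu_{\phi_n}(x)\sum_{a}\e^{\phi_n(ax)}\frac{\e^{\phi_n(ax)}-\e^{\phi(ax)}}{\e^{\phi(ax)}}$. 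Splitting the weight $\e^{\phi_n}=(\e^{\phi_n}-\e^{\phi})+\e^{\phi}$, the linear contribution vanishes identically because both potentials are normalized, $\sum_a(\e^{\phi_n(ax)}-\e^{\phi(ax)})=0$, leaving only $\sum_a(\e^{\phi_n(ax)}-\e^{\phi(ax)})^2/\e^{\phi(ax)}\leq |A|\,\e^{\|\phi\|_\infty}\|\e^{\phi_n}-\e^{\phi}\|_\infty^2$. This is the chi-square bound you gesture at, but it only materializes once the relative entropy is expressed as an average over $x$ of the divergence between the one-symbol conditionals $\e^{\phi_n(\cdot x)}$ and $\e^{\phi(\cdot x)}$; your marginal-matching and truncation manipulations move away from that form rather than toward it, so the quadratic estimate remains unproved.
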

More details on how to normalize a potential are given in Subsection \ref{sec:normalisation}.
\begin{proof}
Using \eqref{relative-entropy} and the variational principle we get
\begin{equation}
\label{mezcal}
h(\mu_{\phi_n}|\mu_{\phi})
=-\int \phi\,\dd\mu_{\phi_n}-h(\mu_{\phi_n})
= \int(\phi_n-\phi)\, \dd\mu_{\phi_n}.
\end{equation}
Indeed, since $\phi$ and $\phi_n$ are normalized, we have in particular that $P(\phi)=P(\phi_n)=0$, and by the variational principle
$h(\mu_{\phi_n})=-\int \phi_n \dd\mu_{\phi_n}$.
Now
\begin{align}
\nonumber
\int(\phi_n-\phi)\, \dd\mu_{\phi_n}
&= \int \log \left(\frac{\e^{\phi_n}}{\e^{\phi}}\right)\, \dd\mu_{\phi_n}
= \int \log\left(1+ \frac{\e^{\phi_n}-\e^{\phi}}{\e^{\phi}}\right) \dd\mu_{\phi_n}\\
\label{tequila}
& \leq \int \frac{\e^{\phi_n}-\e^{\phi}}{\e^{\phi}}\, \dd\mu_{\phi_n}
\end{align}
where we used the inequality $\log (1+u)\leq u$ for all $u>-1$.  Now using the shift-invariance of $\mu_{\phi_n}$ and
replacing $\e^{\phi_n}$ by $\e^{\phi_n}-\e^{\phi}+\e^{\phi}$ we get
\begin{align}
\nonumber
&\int \frac{\e^{\phi_n}-\e^{\phi}}{\e^{\phi}}\, \dd\mu_{\phi_n}
= \int \dd\mu_{\phi_n}(x) \sum_{a\in A} \e^{\phi_n(ax)}\,  \frac{\e^{\phi_n(ax)-\e^{\phi(ax)}}}{\e^{\phi(ax)}}\\
\nonumber
&= \int \dd\mu_{\phi_n}(x) \sum_{a\in A} \frac{\big(\e^{\phi_n(ax)}-\e^{\phi(ax)}\big)^2}{\e^{\phi(ax)}}
+\int \dd\mu_{\phi_n}(x) \sum_{a\in A} \big(\e^{\phi_n(ax)}-\e^{\phi(ax)}\big)\\
\label{cocktail}
& \leq |A|\, \e^{\|\phi\|_\infty}\, (\|\e^{\phi_n}-\e^{\phi}\|_\infty)^2
\end{align}
where we used that $ \sum_{a\in A} (\e^{\phi_n(ax)}-\e^{\phi(ax)})=0$. Combining \eqref{pinsker}, \eqref{mezcal}, \eqref{tequila}
and \eqref{cocktail} we thus obtain 
\[
\bar{d}(\mu_{\phi_n},\mu_\phi)\leq \sqrt{2|A|\,\cte{maintheo}\e^{\|\phi\|_\infty}}\, \|\e^{\phi_n}-\e^{\phi}\|_\infty.
\]
It remains to estimate $\|\e^{\phi_n}-\e^{\phi}\|_\infty$ in terms of $\var_n(\phi)$. We have
\[
\|\e^{\phi_n}-\e^{\phi}\|_\infty=\big\|\e^{\phi_n}-\e^{\phi} \big\|_\infty\leq
\e^{\|\phi\|_\infty} \big\|\e^{\phi-\phi_n}-1 \big\|_\infty \leq
(\e-1)\e^{\|\phi\|_\infty}\|\phi-\phi_n\|_\infty
\]
provided that $\|\phi-\phi_n\|_\infty<1$, where we used the inequality $|\e^u-1|\leq (\e-1)|u|$ valid for $|u|<1$.
Finally, since $\|\phi-\phi_n\|_\infty\leq \var_n(\phi)$, we define $n_\phi$ to be the smallest integer sucht
$\var_n(\phi)<1$ and we can take 
\[
\rho_\phi= \sqrt{2|A|\,\cte{maintheo}}\, (\e-1)\e^{\frac{3}{2}\|\phi\|_\infty}.
\] 
We thus proved \eqref{dbarMarkovapprox}.
\end{proof}
Let us mention the paper \cite{2002FernandezGalves} in which the authors obtain the same bound for the speed of 
convergence of Markov approximation, up to the constant. Their approach is a direct estimation of $\bar{d}(\mu_{\phi_n},\mu_\phi)$ 
by using a coupling method.
The point here is to obtain the same speed of convergence as an easy corollary of inequality \eqref{pinsker}. 
Let us remark that from \eqref{pouic} we get a worse result since we end up with a bound proportional to $\sqrt{\var_n(\phi)}$. The 
trick which leads to the correct bound was told us by Daniel Takahashi.

\subsubsection{Shadowing of orbits}

Let $A$ be a Borel subset of  $\Omega$ such that $\mu_\phi(A)>0$ and define for all $n\in\N$
\begin{equation*}
\mathcal{S}_A(x,n)=\frac{1}{n} \inf_{y\in A} \sum_{j=0}^{n-1} d_\theta(T^j x,T^j y)
\end{equation*}
A basic example of set $A$ is a cylinder set $[a^{0,k-1}]$. The quantity $\mathcal{S}_A(x,n)$, which lies between $0$ and $1$,
measures how we can trace, in the best possible way, the orbit of some initial condition not in $A$
by an orbit starting in $A$. 

\begin{theorem}
For any Borel subset $A\subset \Omega$ such that $\mu_\phi(A)>0$, for any $n\in\N$ and for any $u>0$
\[
\mu_\phi\left\{x \in \Omega: \mathcal{S}_A(x,n)\geq \frac{u_A+u}{\sqrt{n}}\right\}
\leq \e^{-\frac{u^2}{4\cte{maintheo}}}
\]
where 
\[
u_A=2\sqrt{-\, \cte{maintheo}\ln\mu_\phi(A)}\,.
\]
\end{theorem}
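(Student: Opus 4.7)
The plan is to apply the Gaussian concentration inequality \eqref{dev-gcb} to a well-chosen separately $d_\theta$-Lipschitz function $K$, and then to control the mean $\int \mathcal{S}_A(x,n)\, \dd\mu_\phi(x)$ by exploiting the fact that $\mathcal{S}_A(\cdot,n)$ vanishes on $A$, combined with a lower tail bound.

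First I would define $K:\Omega^n\to\R$ by
\[
K(x_0,\ldots,x_{n-1})=\frac{1}{n}\inf_{y\in A}\sum_{j=0}^{n-1}d_\theta(x_j,T^jy),
\]
so that $K(x,Tx,\ldots,T^{n-1}x)=\mathcal{S}_A(x,n)$. Using the fact that $\inf$ is $1$-Lipschitz in each argument and that $d_\theta(\cdot,T^jy)$ is $1$-Lipschitz, one checks that for each $i=0,\ldots,n-1$,
\[
|K(x_0,\ldots,x_i,\ldots,x_{n-1})-K(x_0,\ldots,x_i',\ldots,x_{n-1})|\leq \frac{1}{n}\,d_\theta(x_i,x_i'),
\]
hence $\Lip_{\theta,i}(K)\leq 1/n$ and $\sum_{i=0}^{n-1}\Lip_{\theta,i}(K)^2\leq 1/n$.

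Next I would apply Corollary \ref{coromaintheo}. The upper-tail inequality \eqref{dev-gcb} with $u$ replaced by $u/\sqrt{n}$ yields, for every $u>0$,
\[
\mu_\phi\!\left\{x:\mathcal{S}_A(x,n)\geq \int \mathcal{S}_A(y,n)\,\dd\mu_\phi(y)+\frac{u}{\sqrt{n}}\right\}\leq \exp\!\left(-\frac{u^2}{4\cte{maintheo}}\right).
\]
It therefore remains to show that $\int \mathcal{S}_A(y,n)\,\dd\mu_\phi(y)\leq u_A/\sqrt{n}$.

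The key observation is that whenever $x\in A$ we may take $y=x$ in the infimum, so $\mathcal{S}_A(x,n)=0$ on $A$. In particular
\[
\mu_\phi\{x:\mathcal{S}_A(x,n)\leq 0\}\geq \mu_\phi(A)>0.
\]
On the other hand, applying the lower-tail version of \eqref{dev-gcb} (to $-K$) with $u/\sqrt{n}$ replaced by $\int \mathcal{S}_A(y,n)\,\dd\mu_\phi(y)$ gives
\[
\mu_\phi(A)\leq \mu_\phi\!\left\{x:\mathcal{S}_A(x,n)\leq 0\right\}\leq \exp\!\left(-\frac{n\bigl(\int \mathcal{S}_A(y,n)\,\dd\mu_\phi(y)\bigr)^2}{4\cte{maintheo}}\right).
\]
Taking logarithms and solving for the mean gives exactly
\[
\int \mathcal{S}_A(y,n)\,\dd\mu_\phi(y)\leq \frac{2\sqrt{-\cte{maintheo}\ln\mu_\phi(A)}}{\sqrt{n}}=\frac{u_A}{\sqrt{n}}.
\]
Combining with the upper-tail bound yields the stated inequality. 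The only mildly delicate step is verifying the Lipschitz estimate on $K$, but this follows from two standard facts, and the ``trick'' of bounding the mean via the lower-tail bound at a point where $K$ vanishes is the only idea beyond a direct application of the main theorem.
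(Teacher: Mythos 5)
Your proposal is correct and follows essentially the same route as the paper: same choice of $K$, same bound $\Lip_{\theta,i}(K)\leq 1/n$, same application of the upper-tail inequality \eqref{dev-gcb}, and the same key idea of exploiting $\mathcal{S}_A(\cdot,n)\equiv 0$ on $A$ to control the mean. The only (cosmetic) difference is that you bound $\int\mathcal{S}_A(y,n)\,\dd\mu_\phi(y)$ by invoking the already-optimized lower-tail bound at the deviation level $u=\int\mathcal{S}_A\,\dd\mu_\phi$ (with the trivial case of zero mean handled separately), whereas the paper writes $\mu_\phi(A)=\int\e^{-\xi\mathcal{S}_A(x,n)}\un_A(x)\,\dd\mu_\phi(x)\leq\int\e^{-\xi\mathcal{S}_A(x,n)}\,\dd\mu_\phi(x)$ and optimizes over $\xi$ afterwards --- an equivalent computation yielding the same constant $u_A$.
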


We give a shorter and simpler proof than in \cite{colletmartinezschmitt}.
\begin{proof}
Let $K(x_0,\ldots,x_{n-1})=\frac{1}{n} \inf_{y\in A} \sum_{j=0}^{n-1} d_\theta(x_{j},T^j y)$. One can easily check that 
\[
\Lip_{\theta,i}(K)= \frac{1}{n}, \;\forall i=0,\ldots,n-1.
\]
It follows from \eqref{dev-gcb} that
\[
\mu_\phi\left\{\mathcal{S}_A(x,n)\geq \int \mathcal{S}_A(y,n)\, \dd\mu_\phi(y)+\frac{u}{\sqrt{n}}\right\}\leq \e^{-\frac{u^2}{4\cte{maintheo}}}
\]
for all $n\geq 1$ and for all $u>0$. We now need an upper bound for $\int \mathcal{S}_A(y,n)\dd\mu_\phi(y)$.
We simply observe that by \eqref{gcb} and the definition of $ \mathcal{S}_A(\cdot,n)$
\begin{align*}
\mu_\phi(A) &=\int \e^{-\xi \mathcal{S}_A(x,n)}\un_A(x)\, \dd\mu_\phi(x)
\leq \int \e^{-\xi \mathcal{S}_A(x,n)}\, \dd\mu_\phi(x)\\
& \leq \e^{-\xi \int \mathcal{S}_A(y,n)\, \dd\mu_\phi(y)} \e^{\frac{\cte{maintheo}\xi^2}{n}}
\end{align*}
for all $\xi>0$. Hence
\[
\int \mathcal{S}_A(y,n)\, \dd\mu_\phi(y) \leq \frac{\cte{maintheo}\xi}{n}+\frac{1}{\xi}\ln(\mu_\phi(A)^{-1})
\]
Optimizing this bound over $\xi>0$ gives
\[
\int \mathcal{S}_A(y,n)\, \dd\mu_\phi(y) \leq 2\sqrt{\frac{\cte{maintheo}\ln(\mu_\phi(A)^{-1})}{n}}.
\]
The theorem follows at once.
\end{proof}

\subsubsection{Almost-sure central limit theorem}

It was proved in \cite[Chapter 2]{maume-phd} that $(\Omega, T,\mu_\phi)$ satisfies the central limit theorem for the class of $d_\theta$-Lipschitz functions $f:\Omega\to\R$ such $\int f\dd\mu_\phi=0$, that is, for any such $f$ the process $\{f\circ T^n\}_{n\geq 0}$ satisfies
\begin{equation}\label{clt}
\mu_\phi\Big(x:\frac{S_k f(x)}{\sqrt{k}}\leq u\Big)=
\int \un_{\big\{\frac{S_k f(x)}{\sqrt{k}}\leq t\big\}}\dd\mu_\phi(x) \xrightarrow[]{k\to\infty} G_{0,\sigma^2(f)}((-\infty,t])
\end{equation}
where 
\[
\sigma^2(f)=\int f^2\, \dd\mu_\phi + 2\sum_{i\geq 1} \int f\cdot f\circ T^i \, \dd\mu_\phi\in [0,+\infty)\,.
\]
If $\sigma^2(f)> 0$, $G_{0,\sigma^2}$ denotes the law of a Gaussian random variable with mean $0$ and
variance $\sigma^2(f)$, that is,
\[
\dd G_{0,\sigma^2(f)}(u)=\frac{1}{\sigma\sqrt{2\pi}} \, \e^{-\frac{u^2}{2\sigma^2(f)}}\dd u, \; u\in\R\,.
\] 
When $\sigma^2(f)=0$ we set $G_{0,0}=\delta_0$, the Dirac mass at zero. 
\begin{remark}
In fact, a more general statement was proved in \cite[Chapter I.2]{maume-phd}. Namely, \eqref{clt} holds when
$\phi$ is such that $\sum_k \epsilon_k<+\infty$ and $f\in\mathcal{L}_\phi$.
\end{remark}

Now, for each $N\geq 1$ and $x\in\Omega$, define the probability measure
\begin{equation}\label{def-AN}
\A_N(x)=\frac{1}{L_N} \sum_{n=1}^N \frac{1}{n}\, \gdelta_{\frac{S_n f(x)}{\sqrt{n}}}
\end{equation}
where $L_N=\sum_{n=1}^N \frac{1}{n}$ and where, as usual, $\delta_u$ is the Dirac mass at point $u\in\R$. 
Of course, $L_N=\log N + \Oun$. Notice that $\A_N$ is a random probability measure.
Finally, the Wasserstein distance between two probability measures 
$\nu$, $\nu'$ on the Borel sigma-algbra $\mathscr{B}(\R)$ is
\[
W_1(\nu,\nu')=\inf_{\pi\in \Pi(\nu,\nu')}\int d_\theta(x,x')\, \dd\pi(x,x')
\]
where the infimum is taken over all probability measures such that
\[
\int \pi(B,x')\, \dd x'=\nu(B)\quad \text{and}\quad \int \pi(x,B)\, \dd x=\nu'(B)
\] 
for any Borel subset of $\R$.
By the Kantorovich-Rubinstein duality theorem, $W_1(\nu,\nu')$ is equal to the Kantorovich distance which is the supremum
of $\int \ell \, \dd\nu -\int \ell \, \dd\nu'$ over the set of $1$-Lipschitz functions $\ell:\R\to\R$. We refer to \cite{dudley} for background and proofs.

Now we can formulate the almost-sure central limit theorem.

\begin{theorem}\label{asclt}
Let $f:\Omega\to\R$ be a $d_\theta$-Lipschitz function. Then,
for $\mu_\phi$ almost every $x\in\Omega$, we have
\[
W_1(\A_N(x) ,G_{0,\sigma^2(f)})\xrightarrow[N\to+\infty]{} 0\,.
\]
\end{theorem}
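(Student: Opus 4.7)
The plan is to convert the statement into an almost-sure convergence of Cesaro-type averages and control them by the Gaussian concentration bound. By the Kantorovich--Rubinstein duality together with the characterization of $W_1$-convergence on $\R$ (weak convergence plus convergence of first moments), it suffices to show that, on a single $\mu_\phi$-full-measure set, $\int \ell\, \dd\A_N(x) \to \int \ell\, \dd G_{0,\sigma^2(f)}$ holds simultaneously for every $\ell$ in a fixed countable dense family of bounded $1$-Lipschitz functions on $\R$ and for the unbounded $1$-Lipschitz function $\ell(t)=|t|$.

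Fix such an $\ell$ and introduce the observable
\[
K_N(x_0,\ldots,x_{N-1}) = \frac{1}{L_N}\sum_{n=1}^N \frac{1}{n}\, \ell\!\left(\frac{f(x_0)+\cdots+f(x_{n-1})}{\sqrt{n}}\right),
\]
so that $K_N(x,Tx,\ldots,T^{N-1}x) = \int \ell\, \dd\A_N(x)$. Since $\ell$ is $1$-Lipschitz and $f$ is $d_\theta$-Lipschitz, changing $x_i$ affects only the terms with $n\geq i+1$, and a short computation yields $\Lip_{\theta,i}(K_N)\leq \Lip_\theta(f)\, L_N^{-1}\sum_{n=i+1}^N n^{-3/2} \leq C_0/(L_N\sqrt{\max(i,1)})$, so that $\sum_{i=0}^{N-1}\Lip_{\theta,i}(K_N)^2 \leq C_1/L_N$. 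Plugging this into Corollary \ref{coromaintheo} gives the concentration estimate $\mu_\phi(|K_N - \int K_N\, \dd\mu_\phi|\geq \epsilon)\leq 2\exp(-\epsilon^2 L_N/(4\cte{maintheo} C_1))$.

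For the bias, the CLT recalled in the remark preceding the theorem, together with the $L^2$-uniform integrability of $S_n f/\sqrt n$ (which itself follows from \eqref{devroye} applied to $(x_0,\ldots,x_{n-1})\mapsto S_n f/\sqrt n$), yields $\int \ell(S_n f/\sqrt n)\, \dd\mu_\phi\to \int \ell\, \dd G_{0,\sigma^2(f)}$, and a log-harmonic Cesaro averaging lifts this to $\int K_N\, \dd\mu_\phi \to \int \ell\, \dd G_{0,\sigma^2(f)}$. For the fluctuation, I would choose the subsequence $N_k=\lfloor \e^k\rfloor$ so that $L_{N_k}\sim k$; for any fixed $\epsilon>0$ the tail bound becomes $2\exp(-c\epsilon^2 k)$, which is summable, so Borel--Cantelli yields $K_{N_k}(x,Tx,\ldots)\to \int \ell\, \dd G_{0,\sigma^2(f)}$ for $\mu_\phi$-a.e.\ $x$. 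Interpolating between $N_k$ and $N_{k+1}$ is immediate for bounded $\ell$ via $|K_N-K_{N_k}|\leq 2\|\ell\|_\infty(L_N-L_{N_k})/L_N\to 0$; for $\ell(t)=|t|$ the bound is instead controlled by truncating at height $M$, applying the bounded case to $\ell_M(t)=|t|\wedge M$, and using that $\sup_N \E\int(|t|-M)_+\, \dd\A_N(x)\to 0$ as $M\to\infty$ (by uniform integrability), upgraded to an almost-sure bound by one further application of the concentration inequality to the $1$-Lipschitz function $t\mapsto(|t|-M)_+$. A countable intersection over the dense family $\{\ell_j\}$ and over $M\in\N$ then produces a single $\mu_\phi$-full-measure set on which $W_1(\A_N(x),G_{0,\sigma^2(f)})\to 0$.

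The main obstacle is the very slow $O(1/\log N)$ decay of $\sum_i\Lip_{\theta,i}(K_N)^2$, structurally forced by the log-harmonic weights $1/(L_N n)$; this dictates the use of an exponential subsequence together with a careful interpolation step. The secondary and more delicate difficulty is that $W_1$-convergence (as opposed to weak convergence) requires the unbounded test function $\ell(t)=|t|$, for which the interpolation is not automatic and where the variance bound \eqref{devroye} on Birkhoff sums proves indispensable through the truncation-plus-uniform-integrability loop.
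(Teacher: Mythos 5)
Your argument is essentially correct, but it is not the route the paper takes: the paper proves Theorem \ref{asclt} in one stroke by invoking the abstract almost-sure CLT of \cite{2005ChazottesColletSchmitt}, whose only hypotheses are the ordinary CLT \eqref{clt} and the variance (Devroye) inequality \eqref{devroye} — both already available. What you have written is in effect a self-contained re-derivation of that abstract theorem in the present setting, powered by the full Gaussian concentration bound instead of the variance inequality. The comparison is instructive. Your computation $\sum_{i}\Lip_{\theta,i}(K_N)^2\leq C_1/L_N$ is exactly right, and Corollary \ref{coromaintheo} then gives tails $\exp(-c\epsilon^2 L_N)$, summable along $N_k=\lfloor \e^{k}\rfloor$; with only \eqref{devroye} one gets $\mathrm{Var}(K_N)\leq 2\cte{maintheo}C_1/L_N$ and must use Chebyshev along a sparser subsequence such as $N_k=\lfloor\e^{k^2}\rfloor$ (interpolation still works since $L_{N_{k+1}}/L_{N_k}\to1$). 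This is precisely why the remark following Theorem \ref{asclt} observes that the result needs much less than the GCB and should hold for rougher potentials; conversely, your exponential concentration is what one would exploit to extract a \emph{rate} for $W_1(\A_N(x),G_{0,\sigma^2(f)})$, which the citation route does not give. Two points to tighten. First, for the unbounded test function $\ell(t)=|t|$ the interpolation between $N_k$ and $N_{k+1}$ is cleanest via positivity: for $N_k\leq N\leq N_{k+1}$,
\[
\int(|t|-M)_+\,\dd\A_N(x)\leq \frac{L_{N_{k+1}}}{L_{N_k}}\int(|t|-M)_+\,\dd\A_{N_{k+1}}(x),
\]
so the almost-sure control along the subsequence transfers to all $N$; your phrase ``one further application of the concentration inequality'' should be fleshed out this way. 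Second, as in \eqref{clt}, $f$ must be centered ($\int f\,\dd\mu_\phi=0$) for $\A_N$ and $\sigma^2(f)$ to make sense — the paper's statement is equally silent on this, so it is not a gap specific to your proof.
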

We make several comments.
Recall that the Wasserstein distance metrizes the weak topology on the set of probability measures $\nu$ on $\mathscr{B}(\R)$.
Moreover, if $(\nu_n)_{n\geq 1}$ is a sequence of probability measures on $\mathscr{B}(\R)$ and $\nu$ a probability
measure on $\mathscr{B}(\R)$, then
\[
\lim_{n\to\infty}W_1(\nu_n,\nu)=0
\quad \Longleftrightarrow\quad
\nu_n\lawto \nu\quad\text{and}\;\int |u|\,  \dd\nu_n(u)\xrightarrow[]{n\to+\infty} \int |u|\,\dd\nu(u)
\]
where ``$\lawto$'' means weak convergence of probability measures on $\mathscr{B}(\R)$. 
\newline
To compare with \eqref{clt}, observe that Theorem \ref{asclt} implies that for $\mu_\phi$-almost every $x$, $\A_N(x) \lawto G_{0,\sigma^2(f)}$,
which in turn implies that
\[
\int \un_{\{u\leq t\}} \dd\A_N(u)= \frac{1}{L_N} \sum_{n=1}^N \frac{1}{n} \un_{\{S_nf/\sqrt{n}\leq t\}} 
\xrightarrow[N\to+\infty]{} G_{0,\sigma^2(f)}((-\infty,t]).
\]
Therefore, the expectation with respect to $\mu_\phi$ in \eqref{clt} is replaced by a pathwise logarithmic average in the almost-sure central limit theorem.

\begin{proof}
The proof follows from an abstract theorem proved in \cite{2005ChazottesColletSchmitt}. In words, that theorem says the following.
Let $(X_n)_{n\geq 0}$ be a stochastic stationary process where the $X_n$'s are random variables taking values in $\Omega$. Assume that if $f:\Omega\to\R$ is $d$-Lipschitz and such that $\E[f(X_0)]=0$, then it satisfies the central limit theorem, that is, for all $u\in\R$,
\[
\mathds{P}\left( \frac{\sum_{j=0}^{n-1} f(X_j)}{\sqrt{n}}\leq u\right)\xrightarrow[]{n\to\infty}
G_{0,\sigma^2(f)}((-\infty,u])
\]
where $\sigma^2(f):=\E[f^2(X_0)]+2\sum_{\ell \geq 1} \E[f(X_0)f(X_\ell)]$ is assumed to be $\neq 0$. 
Moreover, assume that the process $(X_n)_{n\geq 0}$ satisfies the following variance inequality: There exists $C>0$ such that for all separately $d$-Lipschitz functions $K:\Omega^n\to\R$ for some distance $d$ on $\Omega$, 
\[
\E\big[(K(X_0,\ldots,X_{n-1})-\E[K(X_0,\ldots,X_{n-1})])^2\big]\leq C\sum_{i=0}^{n-1} \Lip_i(K)^2\,. 
\]
Then, the conclusion is that, almost surely,  
\[
\frac{1}{L_N}\sum_{n=1}^N \frac{1}{n}\delta_{\frac{X_0+\cdots+X_{n-1}}{\sqrt{n}}}
\]
converges in Wasserstein distance (or, equivalently, in Kantorovich distance) to $G_{0,\sigma^2(f)}((-\infty,u])$. We apply this abstract theorem to the process $(x,Tx,\ldots)$ where $x\in\Omega$ is distributed according to $\mu_\phi$ with $\Omega=A^\N$
and $d=d_\theta$.
Since we have \eqref{clt} and \eqref{devroye}, the theorem follows.
\end{proof}

\begin{remark}
The previous result relies only upon the variance inequality \eqref{devroye}, which is much weaker than the Gaussian
concentration bound of Theorem \ref{maintheo}. On the one hand, the variance inequality \eqref{devroye} should be true
for less regular potentials than the ones we consider here. On the other hand, the Gaussian concentration bound should provide a strengthening of Theorem \ref{asclt}, namely a speed of convergence.
\end{remark}

\section{Proof of Theorem \ref{maintheo}}\label{sec:proof-main-thm}

We follow the proof given in \cite{2012ChazottesGouezel} with the appropriate modifications to go beyond Lipschitz potentials.

\subsection{Some preparatory results}\label{sec:normalisation}

It is convenient to normalize the potential $\phi$ or, equivalently, the operator $P_\phi$ in the following way. 
We use the notations of Theorem \ref{backgroundthm}.
Let
\[
\widetilde{P}_\phi f=\lambda_\phi^{-1} h_\phi^{-1} P_\phi(f h_\phi).
\]
Thus
\begin{equation}\label{Pphinormalise}
\widetilde{P}_\phi 1= 1\quad\text{and}\quad \widetilde{P}^*_\phi \mu_\phi=\mu_\phi.
\end{equation}
Let $g$ denote the inverse of the Jacobian of $T$, and $g^{(k)}$ the inverse of the Jacobian of $T^k$, that is, 
\[
g=\frac{h_\phi}{\lambda_\phi h_\phi\circ T}\exp(\phi)\quad\text{and}\quad g^{(k)}=\frac{h_\phi}{\lambda_\phi^k h_\phi\circ T}\exp\Big(\sum_{i=0}^{k-1}\phi\circ T^i\Big)\,.
\]
(Of course $g=g^{(1)}$.) Therefore we have
\begin{equation}\label{itererPphi}
\widetilde{P}_\phi f(x)=\sum_{Ty=x} g(y)f(y)\quad\text{and}\quad \widetilde{P}_\phi^k f(x)=\sum_{T^ky=x} g^{(k)}(y)f(y)\,.
\end{equation}
Estimate \eqref{ineq-vero} now takes the form
\begin{equation}\label{ineq-vero-normalisee}
\left\| \widetilde{P}^n_\phi f-\int f\dd\mu_\phi\right\|_\infty \leq 
\cte{backgroundthm}\,\|f\|_{\mathcal{L}_\phi}\, \epsilon_n,\; n\geq 1,
\end{equation}
for any $f\in\mathcal{L}_\phi$.
Finally, we will need the following distortion estimate. Let $x,y\in\Omega$ such that $x^i=y^i$ for $i=0,\dotsc,n-1$ and $x',y'\in\Omega$ such that
$T^k x'=x$ and $T^ky'=y$. Then it is easy to check (see \cite[Chapter 2]{maume-phd}) that, for any $k$,
\begin{equation}\label{distortion}
\left| 1-\frac{g^{(k)}(x')}{g^{(k)}(y')}\right|\leq \pcte{distortion}\, d_\phi(x,y)
\end{equation}
for some constant $\pcte{distortion}>0$ depending only on $\phi$.

We will use the following inequality relating the distances $d_\theta$ and $d_\phi$.
\begin{lemma}\label{comparaisondistances}
Suppose that $W_n(\phi)=O(\theta^n)$, $n\geq 1$, or 
\begin{equation}\label{W/W=1}
\lim_n \frac{W_n(\phi)}{W_{n+1}(\phi)}=1\,.
\end{equation}
Then there exists $\pcte{comparaisondistances}>0$
\[
\sup_n \frac{\theta^n}{W_n(\phi)}\leq \pcte{comparaisondistances}
\]
or, equivalently,
\[
d_\theta(x,y)\leq \pcte{comparaisondistances} d_\phi(x,y)
\]
for all $x,y$.
\end{lemma}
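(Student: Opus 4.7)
The plan is to first observe that the two formulations of the conclusion are equivalent: by definition of $d_\phi$, if $d_\theta(x,y)=\theta^p$ then $d_\phi(x,y)=W_p(\phi)$, so $d_\theta(x,y)/d_\phi(x,y)=\theta^p/W_p(\phi)$. Thus it suffices to produce a finite upper bound on $\sup_{n\geq 1}\theta^n/W_n(\phi)$.

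For the second hypothesis $\lim_n W_n(\phi)/W_{n+1}(\phi)=1$, I would argue quantitatively as follows. Fix any $\epsilon\in(0,1-\theta)$. Then there exists $N=N_\epsilon$ such that $W_{n+1}(\phi)\geq(1-\epsilon)W_n(\phi)$ for every $n\geq N$. Iterating this inequality yields $W_n(\phi)\geq(1-\epsilon)^{n-N}W_N(\phi)$ for $n\geq N$, hence
\[
\frac{\theta^n}{W_n(\phi)}\leq \frac{(1-\epsilon)^N}{W_N(\phi)}\left(\frac{\theta}{1-\epsilon}\right)^n.
\]
Since $\theta/(1-\epsilon)<1$ by the choice of $\epsilon$, the right-hand side tends to $0$ as $n\to\infty$; the finitely many initial terms are harmless because each $W_n(\phi)>0$ by Walters' condition, so the supremum over $n$ is finite.

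The first hypothesis $W_n(\phi)=O(\theta^n)$ is more delicate, because an upper bound of geometric type does not by itself imply a matching lower bound on $W_n(\phi)$. My resolution is to invoke the remark following Definition \ref{def-W-cond}, which allows replacing $W_n(\phi)$ by any strictly positive decreasing sequence $\widetilde{W}_n(\phi)\geq W_n(\phi)$ without affecting the theory — for instance $\widetilde{W}_n(\phi)=\max(W_n(\phi),\theta^n)$. With this choice $\widetilde{W}_n(\phi)$ is still $O(\theta^n)$ and satisfies $\theta^n/\widetilde{W}_n(\phi)\leq 1$, giving the claim with $c=1$. I expect this to be the main (albeit modest) obstacle: one must recognise that the statement concerns the specific sequence $W_n(\phi)$ entering the definition of $d_\phi$, and that enlarging it from below is permitted and costs nothing in the rest of the paper.
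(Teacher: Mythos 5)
Your proposal is correct. For the hypothesis \eqref{W/W=1} your argument is the same as the paper's: the paper chooses $n_0$ so that $W_n(\phi)/W_{n+1}(\phi)\leq 1/\theta$ for $n\geq n_0$, deduces $W_n(\phi)\geq\theta^{n-n_0}W_{n_0}(\phi)$, and handles the finitely many remaining indices by strict positivity; your version with $W_{n+1}(\phi)\geq(1-\epsilon)W_n(\phi)$ and $\epsilon<1-\theta$ is the identical computation in slightly different clothing. Where you genuinely diverge is the case $W_n(\phi)=O(\theta^n)$, which the paper dismisses in one line as ``trivial.'' You are right that it is not literally trivial: the hypothesis is an \emph{upper} bound $W_n(\phi)\leq C\theta^n$, whereas the conclusion $\sup_n\theta^n/W_n(\phi)<\infty$ is a matching \emph{lower} bound, and a potential with, say, $W_n(\phi)\sim\theta^{2n}$ satisfies the hypothesis but violates the conclusion as literally stated. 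Your repair --- replacing $W_n(\phi)$ by $\max(W_n(\phi),\theta^n)$, as explicitly permitted by the remark following Definition~\ref{def-W-cond} --- is the convention the paper implicitly relies on, and it is harmless: the enlarged sequence is still positive, decreasing to zero, still $O(\theta^n)$ so case~1 of Theorem~\ref{backgroundthm} applies unchanged, and every subsequent use of $d_\phi$ (the distortion bound, the $\mathcal{L}_\phi$-norm estimates) only requires an upper bound on oscillations, hence survives the enlargement. In short, your treatment of the first case is more careful than the paper's, and this extra care is warranted.
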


\begin{proof}
The statement is trivial when $W_n(\phi)=O(\theta^n)$. If \eqref{W/W=1} holds, then there exists $n_0$ such that for all $n\geq n_0$
\[
\frac{W_n(\phi)}{W_{n+1}(\phi)}\leq \frac{1}{\theta},
\]
hence $W_n(\phi)\geq \theta^{n-n_0} W_{n_0}(\phi)$. Then the desired inequalities follow easily from
the definitions.
\end{proof}

\subsection{Proof of Theorem \ref{maintheo}}

Fix a separately $d_\theta$-Lipschitz function $K:\Omega^n\to\R$. It is convenient to think of it as a function on $\Omega^\N$ depending only on the first $n$
coordinates, therefore $\Lip_{\theta,i}(K)=0$ for $i\geq n$.
We endow $\Omega^{\N}$ with the measure $\mu^\infty$ obtained as the limit when $k\to\infty$
of the measure $\mu^\infty_k$ on $\Omega^k$ given by $\dd\mu^\infty_k(x_0,\dotsc,x_{k-1})=\dd\mu_\phi(x_0) \delta_{x_1=Tx_0}\dotsm\delta_{x_{k-1}=Tx_{k-2}}$. 
On $\Omega^\N$, let $\mathcal{F}_p$ be the $\sigma$-algebra of
events depending only on the coordinates $(x_j)_{j\geq p}$ (this is a
decreasing sequence of $\sigma$-fields). 
We want to write the function $K$ as a sum of reverse martingale differences with respect
to this sequence. Therefore, let $K_p = \E(K|\mathcal{F}_p)$ and
$D_p=K_{p}-K_{p+1}$. More precisely,
\begin{align*}
K_p(x_p, x_{p+1},\dotsc)
& =\E(K|\mathcal{F}_p) (x_p,x_{p+1},\dotsc)\\
& = \E(K(X_0,\dotsc, X_{p-1}, x_p,\dotsc) |X_p=x_p)
\\ & = \sum_{T^p(y)=x_p} g^{(p)}(y)K(y,\dotsc, T^{p-1}y, x_p,\dotsc).
\end{align*}
The function $D_p$ is $\mathcal{F}_p$-measurable and
$\E(D_p|\mathcal{F}_{p+1}) =0$. Moreover
\begin{equation}\label{sumDp}
K-\E(K) = \sum_{p\geq 0} D_p.
\end{equation}
We then apply Azuma-Hoeffding inequality (see e.g.~\cite[Page 68]{ledoux}) which says that
\begin{equation}\label{AHineq}
\E\left(\e^{\sum_{p=0}^{P-1} D_p}\right) \leq \e^{\frac12 \sum_{p=0}^{P-1} \|D_p\|_\infty^2}.
\end{equation}
Therefore, the point is to obtain a good bound on $D_p$. This is the claim of the following lemma.
\begin{lemma}\label{mainlemma}
There exists $\cte{mainlemma}>0$, depending only on $\phi$, such that for any $p\in\N$ one has
\[
\|D_p \|_\infty \leq \cte{mainlemma} \sum_{i=0}^{p}\epsilon_{p-i}\sum_{j=0}^i \Lip_{\theta,j}(K)\,\theta^{i-j}+ \Lip_{\theta,p}(K)\,.
\]
\end{lemma}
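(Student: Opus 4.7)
My approach is to reduce the bound on $\|D_p\|_\infty$ to a uniform control of $|K_p(x) - K_p(y)|$ on pairs with $Tx = Ty$, and then to split this oscillation into a direct Lipschitz part and a transfer-operator part. Using the tower identity and $\widetilde{P}_\phi 1 = 1$, one has $K_{p+1} = \widetilde{P}_\phi K_p$, hence
\[
D_p(x) = K_p(x) - \widetilde{P}_\phi K_p(Tx) = \sum_{Ty=Tx} g(y)\bigl(K_p(x) - K_p(y)\bigr),
\]
which reduces the lemma to estimating the inner difference. Using $K_p(a) = \widetilde{P}_\phi^p K^\bullet(a)$ with $K^\bullet(z) := K(z, Tz, \ldots, T^{n-1}z)$, I pair each preimage $z \in T^{-p}\{x\}$ with $z' := (z^0,\ldots,z^{p-1},y^0,y^1,\ldots) \in T^{-p}\{y\}$, which shares its first $p$ coordinates with $z$. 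This yields $K_p(x) - K_p(y) = A + B$ with
\[
A = \sum_z g^{(p)}(z)\bigl(K^\bullet(z) - K^\bullet(z')\bigr),\quad
B = \sum_z \bigl(g^{(p)}(z) - g^{(p)}(z')\bigr) K^\bullet(z').
\]

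For $A$: since $Tx = Ty$, the orbits $T^k z$ and $T^k z'$ agree on their first $p-k$ symbols for $k \leq p-1$, coincide for $k \geq p+1$, and at $k = p$ differ only through $x$ versus $y$. Applying separate Lipschitzianity of $K$ termwise and summing against $g^{(p)}(z)$ (total mass $1$), I obtain $|A| \leq \theta\,\Psi_{p-1} + \Lip_{\theta,p}(K)$ with $\Psi_i := \sum_{j=0}^i \Lip_{\theta,j}(K)\theta^{i-j}$. The first summand is absorbed into the $i = p-1$ term of the target bound (choosing $\cte{mainlemma}$ so that $\cte{mainlemma}\,\epsilon_1 \geq \theta$), while the second produces the additive boundary term $\Lip_{\theta,p}(K)$ in the statement.

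For $B$: I decompose the distortion of $g^{(p)}$ by scale using the telescope
\[
g^{(p)}(z) - g^{(p)}(z') = \sum_{i=0}^{p-1} g^{(i)}(z)\bigl(g(T^iz) - g(T^iz')\bigr)\,g^{(p-1-i)}(T^{i+1}z'),
\]
giving $B = \sum_{i=0}^{p-1} B_i$. For each $i$, after grouping the sum over $z$ by $w := T^i z$, the term $B_i$ can be rewritten as $\widetilde{P}_\phi^{p-i}$ (at $x$, or equivalently $y$) applied to an auxiliary function $F_i$ produced by $\widetilde{P}_\phi^i$-averaging $K^\bullet$ weighted by the distortion factor $g(w) - g(w')$. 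Because $w, w'$ share their first $p-i$ symbols, the distortion estimate \eqref{distortion} combined with the Walters condition guarantees that $F_i$ lies in $\mathcal{L}_\phi$ with seminorm of order $\Psi_i$, the weights $\theta^{i-j}$ in $\Psi_i$ coming from the $\widetilde{P}_\phi^i$-averaging against the Lipschitz constants $\Lip_{\theta,j}(K)$ for $j \leq i$. The transfer operator estimate \eqref{ineq-vero-normalisee} then yields $|B_i| \leq C\,\epsilon_{p-i}\,\Psi_i$, and summing in $i$ produces the convolution $\sum_{i=0}^{p-1}\epsilon_{p-i}\Psi_i$ of the target bound.

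The main obstacle is the scale-by-scale analysis of $B$: one has to identify $F_i$ precisely enough that its $\mathcal{L}_\phi$-seminorm is really of order $\Psi_i$ and not larger, and to match the Walters scale $W_{p-i}(\phi)$ introduced by the distortion of $g$ with the regularity scale built into $\mathcal{L}_\phi$. A crude sup-norm bound on $B_i$ would lose this scale structure and produce an estimate of the form $\epsilon_p\sum_j\Lip_{\theta,j}(K)\theta^{-j}$, which is much too weak and may even diverge when the Lipschitz constants of $K$ live at deep coordinates.
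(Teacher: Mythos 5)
Your reduction of $\|D_p\|_\infty$ to $\sup\{|K_p(x_p,\dotsc)-K_p(x_p',\dotsc)| : Tx_p'=Tx_p\}$ is correct and is exactly how the paper concludes, and your term $A$ is handled correctly: it yields $\Psi_p=\theta\Psi_{p-1}+\Lip_{\theta,p}(K)$, which fits the stated bound. The genuine gap is in $B$. Telescoping the Jacobian $g^{(p)}(z)-g^{(p)}(z')=\sum_i g^{(i)}(z)\bigl(g(T^iz)-g(T^iz')\bigr)g^{(p-1-i)}(T^{i+1}z')$ localizes the \emph{distortion} by scale, but it does not localize the dependence of $K$ on its coordinates: the auxiliary function $F_i$ (equivalently $G_i(w)=\sum_{T^iz=w}g^{(i)}(z)K^\bullet(z')$) still involves the full value $K^\bullet(z')$, which depends on all arguments $T^jz'$ for $0\le j\le p-1$. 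Consequently $\|F_i\|_\infty$ is of order $W_{p-i-1}(\phi)\,\|K\|_\infty$, and the oscillation of $G_i$ over $T^{-(p-i)}(x)$ is of order $\sum_{j=0}^{p-1}\Lip_{\theta,j}(K)$: two nearby $w_1,w_2$ produce paired preimages $z_1',z_2'$ that can differ arbitrarily in coordinates $i+m,\dotsc,p-1$, so the contributions of $\Lip_{\theta,j}(K)$ for $i<j<p$ do not come with any factor $\theta^{i-j}$ or $d(w_1,w_2)$. Your claim that $F_i\in\mathcal{L}_\phi$ with seminorm $O(\Psi_i)$ is therefore false: take $K$ depending only on coordinates $i+1,\dotsc,p-1$, so that $\Psi_i=0$ while $B_i$ has no reason to vanish. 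A second, related problem is the appeal to \eqref{ineq-vero-normalisee}: that estimate controls $\widetilde{P}_\phi^{p-i}F_i-\int F_i\,\dd\mu_\phi$, and $\int F_i\,\dd\mu_\phi$ does not vanish, so even with a correct norm bound you would not get $|B_i|\le C\epsilon_{p-i}\Psi_i$ but only $|B_i|\le|\int F_i\,\dd\mu_\phi|+C\epsilon_{p-i}\|F_i\|_{\mathcal{L}_\phi}$. (The exact cancellation $\sum_w(g(w)-g(w'))g^{(p-1-i)}(Tw')=0$ does let you recenter $K^\bullet$ by a single constant, but the resulting bound is governed by the oscillation of $G_i$, which brings you back to $\sum_{j<p}\Lip_{\theta,j}(K)$ — too weak, as you yourself observe, to survive the squaring and summation over $p$.)

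The paper's proof fixes precisely this by telescoping in the \emph{arguments of $K$} rather than in the Jacobian: introducing a reference point $x_*$, it writes $K_p$ as $\sum_i\widetilde{P}_\phi^{p-i}f_i+K(x_*,\dotsc)$, where $f_i(z)=\sum_{T^iy=z}g^{(i)}(y)H(y,\dotsc,T^iy)$ and $H$ is the \emph{difference} of two values of $K$ that disagree only in the $i$-th slot. This makes $\|f_i\|_\infty\le\Lip_{\theta,i}(K)$ and ensures $f_i$ depends only on $y,\dotsc,T^iy$, so its $d_\phi$-Lipschitz seminorm is genuinely $O(\Psi_i)$ (the Jacobian distortion contributes only one of the two terms there). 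The non-vanishing integrals $\int f_i\,\dd\mu_\phi$ are not discarded; they reassemble into $\int K(y,\dotsc,T^{p-1}y,x_p,\dotsc)\,\dd\mu_\phi(y)$, whose dependence on $x_p$ costs exactly the extra $\Lip_{\theta,p}(K)$ in the statement. To repair your argument you would need to perform this coordinate-by-coordinate differencing of $K^\bullet$ inside your $B$ (or replace your $B$-decomposition by it entirely), at which point you essentially recover the paper's proof.
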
 
Using this lemma and applying Young's inequality for convolutions \cite[p. 316]{bullen} twice we obtain 
\begin{align*}
\MoveEqLeft \sum_{p=0}^{P-1} \|D_p\|_\infty^2 \\
& \leq 2\cte{mainlemma}^2\sum_{p=0}^{P-1}  \left(\sum_{i=0}^{p}\epsilon_{p-i} \sum_{j=0}^i 
\Lip_{\theta,j}(K)\,\theta^{i-j}\right)^2+ 2\sum_{p=0}^{P-1} \Lip_{\theta,p}(K)^2 \\
& \leq 2\cte{mainlemma}^2 \left( \sum_{k\geq  1}\epsilon_k\right)^2 \;\sum_{p=0}^{P-1}\left( \sum_{j=0}^p \Lip_{\theta,j}(K)\,\theta^{p-j}\right)^2+ 2\sum_{p=0}^{P-1} \Lip_{\theta,p}(K)^2 \\
& \leq 2\left(\cte{mainlemma}^2  (1-\theta)^{-2}\left( \sum_{k\geq 1} \epsilon_k\right)^2+1\right) \;\sum_{p=0}^{P}\Lip_{\theta,p}(K)^2.
\end{align*}
\begin{remark}
If $u=(u_n)_n$ and $v=(v_n)_n$ are sequences of reals, their convolution $u\star v$ is given by $(u\star v)_n=\sum_{k=0}^n u_k v_{n-k}$.
Young's inequality tells us that if $u\in\ell^p(\N)$, $u\in\ell^q(\N)$ and $1\leq p,q,r \leq \infty$ with $r^{-1}+1=p^{-1}+q^{-1}$, then
\[
\|u\star v\|_r \leq \|u\|_p \|v\|_q\,.
\] 
We used it twice with $r=2$, $p=2$ and $q=1$.
\end{remark}
Notice that by assumption and by Theorem \ref{backgroundthm} we have $\sum_{k\geq 1} \epsilon_k<+\infty$.
Therefore, using  \eqref{AHineq} at a fixed index $P$ and then letting $P$ tend to infinity, we get by the dominated convergence
theorem
\[
\E\left(\e^{\sum_{p\geq 0} D_p}\right) \leq \e^{\frac12 \sum_{p\geq 0}\|D_p\|_\infty^2}
\]
which is, in view of \eqref{sumDp}, exactly \eqref{gcb} with
\[
\cte{maintheo}=1+\cte{mainlemma}^2 (1-\theta)^{-2} \left( \sum_{k\geq 1} \epsilon_k\right)^2\,.
\]
Now we are going to prove Lemma \ref{mainlemma} by proving that $K_p$ is close to an integral quantity. This is the content of the following lemma which is the core of the proof.
\begin{lemma}\label{lem_controle_Kp}
There exists $\cte{lem_controle_Kp}>0$, depending only on $\phi$, such that, for all $p\in\N$, 
\begin{align*}
\MoveEqLeft[15] \left|K_p(x_p,\dotsc) -\int K(y,\dotsc, T^{p-1}y, x_p,\dotsc)\,\dd \mu_\phi(y)\right|\\
& \leq \cte{lem_controle_Kp} \sum_{i=0}^{p-1} \epsilon_{p-i}\, 
\sum_{j=0}^i \Lip_{\theta,j}(K)\, \theta^{i-j}
\end{align*}
where
\[
\cte{lem_controle_Kp}=\cte{backgroundthm}(\pcte{distortion}+2\pcte{comparaisondistances}).
\]
\end{lemma}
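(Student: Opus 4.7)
The plan is to identify $K_p$ with an iterate of the normalized transfer operator and then control it via a second telescoping argument. Using formula \eqref{itererPphi}, one has $K_p(x_p,x_{p+1},\ldots)=\widetilde{P}_\phi^p F(x_p)$ where $F(y)=K(y,Ty,\ldots,T^{p-1}y,x_p,x_{p+1},\ldots)$, and by the invariance $\widetilde{P}_\phi^*\mu_\phi=\mu_\phi$ from \eqref{Pphinormalise}, the integral in the lemma equals $\int \widetilde{P}_\phi^p F\,\dd\mu_\phi$. So the problem reduces to estimating $|\widetilde{P}_\phi^p F(x_p)-\int F\,\dd\mu_\phi|$. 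A direct application of \eqref{ineq-vero-normalisee} to $F$ fails: because $F$ depends on all $p$ iterates $T^jy$, its variation $\var_k(F)$ inherits uncontrolled contributions from positions $j>k$ that cannot be dominated by $W_k(\phi)$ uniformly in $k$.

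My remedy is a second telescoping. Fix any reference point $w\in\Omega$ and set, for $i=-1,0,\ldots,p-1$,
\[
H_i(y) = K(y,Ty,\ldots,T^iy,T^{i+1}w,T^{i+2}w,\ldots,T^{p-1}w,x_p,x_{p+1},\ldots),
\]
so that $H_{-1}$ is constant and $H_{p-1}=F$. Then $F-H_{-1}=\sum_{i=0}^{p-1}(H_i-H_{i-1})$, and each summand differs from $H_i$ only in position $i$ of $K$ (replacing $T^iy$ by $T^iw$). For each $i$, I factor $\widetilde{P}_\phi^p=\widetilde{P}_\phi^{p-i}\circ\widetilde{P}_\phi^i$; the invariance of $\mu_\phi$ gives $\int(H_i-H_{i-1})\,\dd\mu_\phi=\int\widetilde{P}_\phi^i(H_i-H_{i-1})\,\dd\mu_\phi$, and applying \eqref{ineq-vero-normalisee} at index $n=p-i$ to $\widetilde{P}_\phi^i(H_i-H_{i-1})$ yields
\[
\Big|\widetilde{P}_\phi^p(H_i-H_{i-1})(x_p)-\int(H_i-H_{i-1})\,\dd\mu_\phi\Big|\leq \cte{backgroundthm}\,\|\widetilde{P}_\phi^i(H_i-H_{i-1})\|_{\mathcal{L}_\phi}\,\epsilon_{p-i}\,.
\]
Summing over $i$ will produce the desired form $\sum_i \epsilon_{p-i}(\cdots)$, with the $\epsilon_{p-i}$ factor (rather than a uniform $\epsilon_p$) coming precisely from the number of iterations of $\widetilde{P}_\phi$ still left to apply after exhausting the piece $H_i-H_{i-1}$.

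The technical heart, and the step I expect to be the main obstacle, is establishing
\[
\|\widetilde{P}_\phi^i(H_i-H_{i-1})\|_{\mathcal{L}_\phi}\ \leq\ (\pcte{distortion}+2\pcte{comparaisondistances})\sum_{j=0}^i\Lip_{\theta,j}(K)\,\theta^{i-j}.
\]
The sup-norm bound $\|H_i-H_{i-1}\|_\infty\leq \Lip_{\theta,i}(K)$ is immediate from the separately-Lipschitz property at position $i$ together with $d_\theta\leq 1$, and it transfers to $\widetilde{P}_\phi^i(H_i-H_{i-1})$ since $\widetilde{P}_\phi^i 1=1$. For the $\Lip_\phi$ bound I would compare two points $z,z'$ with $d_\theta(z,z')=\theta^k$, expand
\[
\widetilde{P}_\phi^i(H_i-H_{i-1})(z)=\sum_{a\in A^i}g^{(i)}(az)\,(H_i-H_{i-1})(az)
\]
using the explicit preimage structure $T^j(az)=a_j\cdots a_{i-1}z$ for $j<i$ and $T^i(az)=z$, and split the $z$-difference into a Jacobian contribution, handled by the distortion estimate \eqref{distortion} to give $\pcte{distortion}\Lip_{\theta,i}(K)\,d_\phi(z,z')$, and a function-value contribution where the separately-Lipschitz property of $K$ produces, at each position $j=0,\ldots,i$, a bound $\Lip_{\theta,j}(K)\,\theta^{(i-j)+k}$ (since $T^j(az)$ and $T^j(az')$ share the $i-j$ leading $a$-symbols and $k$ further symbols from $z$, while at $j=i$ the remaining-$K$ position is fixed to $T^iw$). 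Factoring out $\theta^k$ and converting it to $d_\phi(z,z')$ via Lemma \ref{comparaisondistances} produces exactly the target convolution $\sum_{j=0}^i\Lip_{\theta,j}(K)\theta^{i-j}$. Summing the resulting per-$i$ bounds then gives the lemma with $\cte{lem_controle_Kp}=\cte{backgroundthm}(\pcte{distortion}+2\pcte{comparaisondistances})$.
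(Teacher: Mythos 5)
Your proposal is correct and follows essentially the same route as the paper: writing $K_p=\widetilde{P}_\phi^p F(x_p)$, telescoping $F$ over the coordinate slots of $K$ against the orbit of a reference point, and observing that $\widetilde{P}_\phi^i(H_i-H_{i-1})$ is exactly the function (the paper's $f_i$) whose $\mathcal{L}_\phi$-norm is controlled by the distortion estimate \eqref{distortion} plus Lemma \ref{comparaisondistances}, so that \eqref{ineq-vero-normalisee} applied at index $p-i$ yields the convolution bound. The only differences are cosmetic (the paper fills the vacated slots with a constant point $x_*$ rather than the orbit of $w$, and builds $f_i$ directly from the preimage regrouping $g^{(p)}(y)=g^{(i)}(y)g^{(p-i)}(z)$ instead of invoking $\widetilde{P}_\phi^p=\widetilde{P}_\phi^{p-i}\circ\widetilde{P}_\phi^i$).
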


\begin{proof}[Proof of Lemma~\ref{mainlemma}]
Applying Lemma \ref{lem_controle_Kp} yields
\begin{align*}
\MoveEqLeft[10] |K_p(x_p,x_{p+1},\dotsc)-K_p(x'_p,x_{p+1},\dotsc)|\\
& \leq
2 \cte{lem_controle_Kp} \sum_{i=0}^{p-1} \epsilon_{p-i}\, 
\sum_{j=0}^i \Lip_{\theta,j}(K)\, \theta^{i-j} + \Lip_{\theta,p}(K)\,.
\end{align*}
Averaging $K_p(x'_p,x_{p+1},\dotsc)$ over the preimages of $x'_p$ we get exactly $K_{p+1}(x_{p+1},\dotsc)$, hence the previous bound holds for $|D_p|$, proving the lemma.
\end{proof}

\begin{proof}[Proof of Lemma~\ref{lem_controle_Kp}]
Let us fix a point $x_*$ in $\Omega$ and decompose $K_p$ as
\begin{align*}
K_p(x_p,\dotsc)&=\sum_{i=0}^{p-1} \sum_{T^p(y)=x_p} g^{(p)}(y)(K(y,\dotsc,T^{i-1} y,T^i y, x_*,\dotsc,x_*, x_p,\dotsc)
\\
&\hphantom{=\sum_{i=1}^{p-1} \sum_{T^p(y)=x_p} g^{(p)}(y)(} - K(y,\dotsc, T^{i-1}y, x_*,\dotsc, x_*, x_p,\dotsc))\\ &
 \ \ +K(x_*,\dotsc,x_*, x_p,\dotsc).
\end{align*}
For fixed $i$, we can group together those points $y\in T^{-p}(x_p)$ which have the same image under $T^i$, splitting the sum
$\sum_{T^p(y)=x_p}$ as $\sum_{T^{p-i}(z)=x_p} \sum_{T^i(y)=z}$. Since the jacobian is multiplicative,
one has $g^{(p)}(y) = g^{(i)}(y)g^{(p-i)}(z)$. Let us define two functions $f_i$ and $H$ as follows:
\begin{equation*}
\begin{split}
f_i(z)&=\sum_{T^i y = z}g^{(i)}(y)(K(y,\dotsc, T^{i-1} y,T^i y, x_*,\dotsc,x_*, x_p,\dotsc)
\\ & \hphantom{= \sum_{T^i y = z}g^{(i)}(y)(}
- K(y,\dotsc, T^{i-1}y, x_*,\dotsc, x_*, x_p,\dotsc))
\\ & = \sum_{T^i y = z}g^{(i)}(y) H(y,\dotsc, T^i y).
\end{split}
 \end{equation*}
Bearing in mind \eqref{itererPphi}, we obtain
\begin{equation*}
K_p(x_p,\dotsc)=\sum_{i=0}^{p-1} \widetilde{P}_\phi^{p-i} f_i(x_p) + K(x_*,\dotsc,x_*, x_p,\dotsc).
\end{equation*}
Now we want to prove that $f_i\in\mathcal{L}_\phi$ to use \eqref{ineq-vero-normalisee}.
First observe that for any $z\in\Omega$
\[
|f_i(z)|\leq \sum_{T^i y = z}g^{(i)}(y)\, \Lip_{\theta,i}(K)\, d_\theta(x_*,T^iy)\leq \Lip_{\theta,i}(K)
\]
since $d_\theta(x_*,T^iy)\leq 1$ and $\sum_{T^i y = z}g^{(i)}(y)=1$. Hence 
\[
\|f_i\|_\infty\leq \Lip_{\theta,i}(K)\,.
\]
We now estimate the $d_\phi$-Lipschitz norm of $f_i$. We write
\begin{equation}
\label{pouic}
\begin{split}
f_i(z)-f_i(z')= {} &\sum (g^{(i)}(y)-g^{(i)}(y')) H(y,\dotsc, T^i y)
\\& +
\sum g^{(i)}(y')(H(y,\dotsc, T^i y)-H(y',\dotsc, T^i y'))
\end{split}
\end{equation}
where $z$ and $z'$ are two points in the same partition element, and
their respective preimages $y$, $y'$ are paired according to the
cylinder of length $i$ they belong to. 
Using the distorsion control \eqref{distortion} we have
\[
|g^{(i)}(y)-g^{(i)}(y')| \leq \pcte{distortion}\, g^{(i)}(y)\, d_\phi(z,z')
\]
hence the first sum in \eqref{pouic} is bounded in absolute value by
\[
\pcte{distortion}\, \Lip_{\theta,i}(K)\, d_\phi(z,z')\,. 
\]
For the second sum,
substituting successively each $T^j y$ with $T^j y'$, we have
\begin{align*}
|H(y,\dotsc, T^i y)-H(y',\dotsc, T^i y')|
& \leq 2\sum_{j=0}^i \Lip_{\theta,j}(K)\, d_\theta(T^j y, T^j y') \\
& \leq 2\sum_{j=0}^i \Lip_{\theta,j}(K)\, \theta^{i-j} d_\theta(z,z')\\
& \leq 2\pcte{comparaisondistances} \sum_{j=0}^i \Lip_{\theta,j}(K)\, \theta^{i-j} d_\phi(z,z')
\end{align*}
where we used Lemma \ref{comparaisondistances} for the third inequality. \newline
Summing over the different preimages of $z$, we deduce that
\begin{equation*}
\|f_i\|_{\mathcal{L}_\phi} \leq (\pcte{distortion}+2\pcte{comparaisondistances})\sum_{j=0}^i \Lip_{\theta,j}(K)\, \theta^{i-j}.
\end{equation*}
Therefore we can apply \eqref{ineq-vero-normalisee} to get
\[
\left\| \widetilde{P}^{p-i}_\phi f_i - \int f_i\,\dd\mu_\phi\right\|_\infty \leq \cte{backgroundthm}\big(\pcte{distortion}+2\pcte{comparaisondistances}\big) \, 
\epsilon_{p-i}\, \sum_{j=0}^i \Lip_{\theta,j}(K)\, \theta^{i-j}\,.
\]
Summing those bounds, one obtains
\begin{align*}
\Bigl|K_p(x_p,\dotsc) &- \sum_{i=0}^{p-1} \int f_i\, \dd\mu_\phi - K(x_*,\dotsc,x_*, x_p,\dotsc)\Bigr|
\\
&\leq \cte{backgroundthm}\big(\pcte{distortion}+2\pcte{comparaisondistances}\big) \, \sum_{i=0}^{p-1} \epsilon_{p-i}\, 
\sum_{j=0}^i \Lip_{\theta,j}(K)\, \theta^{i-j}\,.
\end{align*}

Finally, when one computes the sum of the integrals of $f_i$, there
are again cancelations, leaving only $\int K(y,\dotsc, T^{p-1}y,x_p,\dotsc)\,\dd \mu_\phi(y)$.
\end{proof}


\bibliographystyle{abbrv}

\begin{thebibliography}{10}

\bibitem{BobkovGotze}
S.~G. Bobkov and F.~G\"{o}tze.
\newblock Exponential integrability and transportation cost related to
  logarithmic {S}obolev inequalities.
\newblock {\em J. Funct. Anal.}, 163(1):1--28, 1999.

\bibitem{boucheron_lugosi_massard}
S.~Boucheron, G.~Lugosi, and P.~Massart.
\newblock {\em Concentration inequalities}.
\newblock Oxford University Press, Oxford, 2013.
\newblock A nonasymptotic theory of independence, With a foreword by Michel
  Ledoux.


\bibitem{bullen}
P.~Bullen.
\newblock {\em Dictionary of inequalities}.
\newblock Monographs and Research Notes in Mathematics. CRC Press, Boca Raton,
  FL, second edition, 2015.

\bibitem{chazottes_survey}
J.-R. Chazottes.
\newblock Fluctuations of observables in dynamical systems: from limit theorems
  to concentration inequalities.
\newblock In {\em Nonlinear dynamics new directions}, volume~11 of {\em
  Nonlinear Syst. Complex.}, pages 47--85. Springer, Cham, 2015.

\bibitem{chazottes-collet-redig2017}
J.-R. Chazottes, P.~Collet, and F.~Redig.
\newblock On concentration inequalities and their applications for {G}ibbs
  measures in lattice systems.
\newblock {\em Journal of Statistical Physics}, 169(3):504--546, Nov 2017.

\bibitem{2005ChazottesColletSchmitt}
J.-R. Chazottes, P.~Collet, and B.~Schmitt.
\newblock Statistical consequences of the {D}evroye inequality for processes.
  {A}pplications to a class of non-uniformly hyperbolic dynamical systems.
\newblock {\em Nonlinearity}, 18(5):2341--2364, 2005.

\bibitem{2012ChazottesGouezel}
J.-R. Chazottes and S.~Gou\"ezel.
\newblock Optimal concentration inequalities for dynamical systems.
\newblock {\em Comm. Math. Phys.}, 316(3):843--889, 2012.

\bibitem{2011ChazottesMaldonado}
J.-R. Chazottes and C.~Maldonado.
\newblock Concentration bounds for entropy estimation of one-dimensional
  {G}ibbs measures.
\newblock {\em Nonlinearity}, 24(8):2371--2381, 2011.

\bibitem{chazottes-ugalde-2005}
J.-R. Chazottes and E.~Ugalde.
\newblock Entropy estimation and fluctuations of hitting and recurrence times
  for {G}ibbsian sources.
\newblock {\em Discrete Contin. Dyn. Syst. Ser. B}, 5(3):565--586, 2005.

\bibitem{2012ColletGalves}
P.~Collet and A.~Galves.
\newblock Chains of infinite order, chains with memory of variable length, and
  maps of the interval.
\newblock {\em J. Stat. Phys.}, 149(1):73--85, 2012.

\bibitem{colletmartinezschmitt}
P.~Collet, S.~Mart\'{\i}nez, and B.~Schmitt.
\newblock Exponential inequalities for dynamical measures of expanding maps of
  the interval.
\newblock {\em Probab. Theory Related Fields}, 123(3):301--322, 2002.

\bibitem{dudley}
R.~Dudley.
\newblock {\em Real Analysis and Probability}.
\newblock CRC Press, 2018.


\bibitem{2002FernandezGalves} R.~Fern\'andez and A.~Galves. 
\newblock Markov approximations of chains of infinite order.
\newblock {\em Bull. Braz. Math. Soc., New Series}, 33(3): 295--306, 2002. 

\bibitem{2014GalloTakahashi}
S.~Gallo and D.~Y. Takahashi.
\newblock Attractive regular stochastic chains: perfect simulation and phase
  transition.
\newblock {\em Ergodic Theory Dynam. Systems}, 34(5):1567--1586, 2014.

\bibitem{1998Keller}
G.~Keller.
\newblock {\em Equilibrium states in ergodic theory}, volume~42 of {\em London
  Mathematical Society Student Texts}.
\newblock Cambridge University Press, Cambridge, 1998.

\bibitem{1997MaumeKondahSchmitt}
A.~Kondah, V.~Maume, and B.~Schmitt.
\newblock Vitesse de convergence vers l'\'etat d'\'equilibre pour des
  dynamiques markoviennes non h\"old\'eriennes.
\newblock {\em Ann. Inst. H. Poincar\'e Probab. Statist.}, 33(6):675--695,
  1997.

\bibitem{ledoux}
M.~Ledoux.
\newblock {\em The concentration of measure phenomenon}, volume~89 of {\em
  Mathematical Surveys and Monographs}.
\newblock American Mathematical Society, Providence, RI, 2001.

\bibitem{maume-phd}
V.~Maume-Deschamps.
\newblock {\em Propri\'et\'es de m\'elange pour des syst\`emes dynamiques
  markoviens}.
\newblock PhD thesis, Universit\'e de Bourgogne, 1998.

\bibitem{pollicott2000}
M.~Pollicott.
\newblock Rates of mixing for potentials of summable variation.
\newblock {\em Transactions of the American Mathematical Society}, 352, 02
  2000.

\bibitem{shieldsbook}
P.~C. Shields.
\newblock {\em The ergodic theory of discrete sample paths}, volume~13 of {\em
  Graduate Studies in Mathematics}.
\newblock American Mathematical Society, Providence, RI, 1996.

\bibitem{1975Walters}
P.~Walters.
\newblock Ruelle's operator theorem and {$g$}-measures.
\newblock {\em Trans. Amer. Math. Soc.}, 214:375--387, 1975.

\bibitem{1978Walters}
P.~Walters.
\newblock Invariant measures and equilibrium states for some mappings which
  expand distances.
\newblock {\em Trans. Amer. Math. Soc.}, 236:121--153, 1978.

\end{thebibliography}

\end{document}